\documentclass[12pt]{amsart}

\usepackage{color}
\usepackage{graphicx}
\usepackage[english]{babel}
\usepackage{times}
\usepackage{mathtools}
\usepackage{amsmath}
\usepackage{amsfonts}
\usepackage{amssymb}
\usepackage{amsthm}
\usepackage{transparent}
\usepackage{array}
\usepackage{listings}
\usepackage{enumitem}

\usepackage[margin=1.1in]{geometry}

\usepackage{hyperref}

\hypersetup{
    colorlinks=true,
    citecolor=blue,
    linkcolor=blue,
    filecolor=magenta,
    urlcolor=cyan,
    pdftitle={Overleaf Example},
}

\def\@fnsymbol#1{\ensuremath{\ifcase#1\or *\or \dagger\or \ddagger\or
   \mathsection\or \mathparagraph\or \|\or **\or \dagger\dagger
   \or \ddagger\ddagger \else\@ctrerr\fi}}

\def\cotan{\text{cot}}
\def\eps{\varepsilon}
\def\S{\mathcal{S}}
\def\H{\mathcal{H}}

\def\R{\mathbb{R}}
\def\Z{\mathbb{Z}}
\def\N{\mathbb{N}}
\def\C{\mathbb{C}}

\theoremstyle{plain}
\newtheorem{thm}{Theorem}[section]
\newtheorem{cor}[thm]{Corollary}
\newtheorem{lemma}[thm]{Lemma}
\newtheorem{prop}[thm]{Proposition}

\theoremstyle{definition}
\newtheorem{defo}[thm]{Definition}
\newtheorem{ex}[thm]{Example}
\newtheorem{rem}[thm]{Remark}

\title[Boundedness of the segment multiplier on $L^p(w)$ spaces]{Characterization of the boundedness of the segment multiplier on weighted Lebesgue spaces}

\author[M. F. Barea-Fern\'andez]{Miguel F. Barea-Fern\'andez*}
\address{Miguel F. Barea-Fern\'andez, Departamento de An\'alisis Matem\'atico y Matem\'atica Aplicada, Fa\-cul\-tad de Ciencias Matem\'aticas, Universidad Complutense de Madrid, Plaza de Ciencias 3, 28040 Madrid, Spain.
\texttt{https://orcid.org/0009-0005-5799-5023}}
\curraddr{}
\email{mibarea@ucm.es}
\thanks{*The first author was partially supported by grant PID2024-155917NB-I00, funded by MCIN/AEI/10.13039/501100011033, and by an FPU Grant FPU23/00891, from Ministerio de Ciencia, Innovaci\'on y Universidades (Spain).}

\author[J. Lang]{Jan Lang}
\address{Jan Lang, Department of Mathematics, The Ohio State University, Columbus, OH, United States, Department of Mathematics, Faculty of Electrical Engineering, Czech Technical University in Prague, Czech Republic. \texttt{https://orcid.org/0000-0003-1582-7273}}
\curraddr{}
\email{lang@math.osu.edu}
\thanks{}

\author[J. Soria]{Javier Soria**}
\address{Javier Soria, Departamento de An\'alisis Matem\'atico y Matem\'atica Aplicada, Fa\-cul\-tad de Ciencias Matem\'aticas, Universidad Complutense de Madrid, Plaza de Ciencias 3, 28040 Madrid, Spain and ICMAT. \texttt{https://orcid.org/0000-0003-3098-7056}}
\curraddr{}
\email{javier.soria@ucm.es}
\thanks{**The third author was partially supported by grants PID2024-155917NB-I00 and CEX2019-000904-S, funded by MCIN/AEI/10.13039/501100011033, and Grupo UCM-970966.}

\subjclass[2020]{Primary: 42B35, 42A45. Secondary: 42B20.}

\date{}

\keywords{Segment multiplier, truncated Hilbert transform, weighted Lebesgue space, Muckenhoupt weights.}

\begin{document}

\begin{abstract}
  For \(1<p<\infty\), we characterize the boundedness of the segment multiplier on \(L^p(w)\) by a large-scale Muckenhoupt condition, obtained by restricting the classical \(A_p\) condition to intervals of length at least one. The proof uses a discretization involving the discrete Hilbert transform on an associated weighted sequence space.
\end{abstract}

\maketitle

\section{Introduction}
The theory of Fourier multipliers belongs to the classical core of harmonic analysis. 
In the modern framework it is formulated in terms of the Fourier transform and includes, among others, the Hilbert transform and other singular integral operators. A systematic treatment of Fourier multipliers acting on $L^p$ spaces goes back to H\"ormander~\cite{hormander}.

Let $m \in L^{\infty}(\R^{n})$. 
The multiplier operator $T$ with symbol $m$ is defined, for any Schwartz function $f \in \S(\R^{n})$, by
\[
   \widehat{Tf}(\xi) = m(\xi)\,\hat f(\xi), \qquad \xi \in \R^{n},
\]
where
\[
   \hat f(\xi) = \int_{\R^{n}} f(x)\,e^{-2\pi i x\cdot \xi}\,dx
\]
denotes the Fourier transform of $f$. 
Whenever $T$ admits a bounded extension on a given function space $X$, the corresponding multiplier norm of $m$ over $X$ is the operator norm of $T\colon X \to X$. 
Since the Fourier transform interchanges convolution and pointwise multiplication, one may also write
\[
   Tf = \check m * f,
\]
where $\check m$ is the inverse Fourier transform of $m$ in the distributional sense. 
For basic material on Fourier multipliers we refer, for example, to \cite[Section~2.5]{grafakos}.

In the one-dimensional case we single out two multipliers which will be central in what follows: the \emph{segment multiplier} $S$, given by
\[
   Sf(x) := \int_\R \frac{\sin(x-y)}{x-y}\,f(y)\,dy, 
   \qquad x \in \R,
\]
and the \emph{Hilbert transform} $H$, defined by the principal value
\[
   Hf(x) := \text{p.v.} \int_\R \frac{f(y)}{x-y}\,dy,
   \qquad x \in \R.
\]

These two operators are among the most classical examples of Fourier multipliers. The $L^p$ boundedness of the Hilbert transform was proved by M.\ Riesz~\cite{riesz} for $1 < p < \infty$. The segment multiplier, in turn, could be considered as the one-dimensional version of the ball multiplier, whose boundedness sparked many studies, including the fundamental observation by Fefferman~\cite{fefferman}, where he disproved any $L^p$ boundedness besides the trivial case $p=2$.

The symbols of $S$ and $H$ are
\[
   m_{S}(\xi) = \pi\,\chi_{\big[-\frac{1}{2\pi},\,\frac{1}{2\pi}\big]}(\xi) \quad \text{and} \quad
   m_{H}(\xi) = -i\pi\,\mathrm{sgn}(\xi),
   \quad \xi \in \R.
\]
A straightforward computation with these functions shows that both operators are related, since
\[
   m_{S}(\xi)
   = i\,\frac{m_{H}(\xi + \frac{1}{2\pi}) - m_{H}(\xi - \frac{1}{2\pi})}{2},
   \qquad \xi \in \R.
\]
Hence, whenever $S$ and $H$ are both defined on a function space $X$, the triangle inequality gives
\[
   \|S\|_{X \to X} \leq \|H\|_{X \to X}.
\]

In the scale of the unweighted Lebesgue spaces $L^{p}(\R)$, $1 < p < \infty$, we know more: De Carli and Laeng~\cite{DeCarli} proved that
\begin{equation*} 
    \|H\|_{L^{p} \to L^{p}}
   = \|S\|_{L^{p} \to L^{p}}
   = n_{p}\,\pi,
\end{equation*}
where
\[
   n_{p} :=
   \begin{cases}
      \displaystyle \tan \Bigl(\frac{\pi}{2p}\Bigr), & 1 < p \le 2, \vspace{5pt} \\ 
      \displaystyle \cotan \Bigl(\frac{\pi}{2p}\Bigr), & 2 \le p < \infty.
   \end{cases}
\]

This indicates that the relationship between the Hilbert transform and the segment multiplier might be closer than one might expect at first glance. Beyond the classical Lebesgue spaces, a natural setting in which to study this relationship are the weighted $L^p$ spaces, $L^p(w)$ with norm
$$ \|f\|_{L^p(w)} = \Big( \int_\R |f(x)|^p w(x) \, dx \Big)^{\frac{1}{p}} = \|f w^{\frac{1}{p}}\|_p, $$
where $w\in L^1_{\text{loc}}(\R)$ is an a.e.\ positive function. The boundedness of the Hilbert transform on $L^p(w)$ has been described successfully in this context by Hunt, Muckenhoupt, and Wheeden \cite[Theorem 9]{huntmuckwhee} in terms of the $A_p$ condition on the weight $w$, which also characterizes the weak-type boundedness for $H$ and the boundedness of the maximal operator. Then, our interest now is to investigate the relationship between the boundedness of $S$ and the $A_p$ condition.

The main theorem of our paper, Theorem \ref{thm:main}, states that the boundedness of $S$ is characterized by a truncated version of the $A_p$ condition (see Definition \ref{def:aptrunc}). In Proposition~\ref{ex:ap1counter} we provide a weight that satisfies our condition for every $1 < p < \infty$ but is never an $A_p$ weight so, for any such $p$, $S$ is bounded on $L^p(w)$ but $H$ is not. This shows a big qualitative difference between both operators in the weighted Lebesgue setting, compared to what was proved in \cite{DeCarli} for $L^p(\R)$.

The organization of this paper is as follows. In Section \ref{sec:prel} we cover the basics of the Muckenhoupt $A_p$ condition and some of its fundamental properties, and we also introduce its discrete version and some operators that we will need for our proofs. In Section \ref{sec:apa} we define the truncated $A_p$ condition, we give analogous fundamental properties and explain its relation to both the classical and the discrete $A_p$ conditions. Finally, in Section \ref{sec:seg} we state the main result of this paper, Theorem \ref{thm:main}, which characterizes the boundedness of the segment multiplier on $L^p(w)$ in terms of the weight $w$.

\section{Preliminary definitions and results} \label{sec:prel}

If $E \subseteq \R$, we will denote by $\chi_E$ the characteristic function of the set $E$, and the Lebesgue measure of $E$ as $|E|$. For any $1 \leq p \leq \infty$, denote its conjugate exponent by $p' = \frac{p}{p-1}$, with the usual meaning of $1' = \infty$ and $\infty' = 1$.

Whenever we write $ f(s) \lesssim g(s) $, the expression denotes an inequality like $ f(s) \leq C g(s) $, where $C$ is a constant independent of any variables. Analogously, the notation $ f(s) \approx g(s) $ means the same as having both $ f(s) \lesssim g(s) $ and $ g(s) \lesssim f(s) $.

If $I$ is any interval and $\lambda > 0$, we denote by $\lambda I$ the interval with the same center as $I$ and $\lambda$ times its length.

\begin{defo}
    For $\lambda\in\R$, denote by $T_\lambda$ the translation operator that acts as $T_\lambda f(x) = f(x-\lambda).$ For $a > 0$, denote by $E_a$ the dilation operator that acts as $E_a f(x) = f(ax)$.
\end{defo}

\subsection{The Muckenhoupt condition}

We begin by recalling the definition and main properties of the classical $A_p$ weights, as written in \cite[Section 7.1]{grafakos}. Results in more general measure spaces can be found in \cite{stromtorch}.

\begin{defo}
    Let $1 < p < \infty$, and let $w$ be a weight on $\R$; that is, a non-negative locally integrable function. We say that $w$ satisfies the $A_p$ condition if we have
    $$\sup_I \left( \frac{1}{|I|} \int_I w(x) \, dx \right) \left(\frac{1}{|I|} \int_I w(x)^{-\frac{1}{p-1}}\, dx \right)^{p-1} < \infty, $$
    where the supremum is taken over all bounded intervals $I\subseteq\R$. We denote the previous supremum by $[w]_{A_p}$, and we say that $w\in A_p$.
\end{defo}

\begin{rem}
    Given a weight $w$ on $\R$ and $E \subseteq \R$, we will denote $w(E) = \int_E w(x) \, dx$.
\end{rem}

From the definition of the $A_p$ class, the following basic properties follow:

\begin{prop}[{\cite[Proposition 7.1.5]{grafakos}}] \label{elementaryap}
    Let $1 < p < \infty$ and $w\in A_p$. Then:
    \begin{enumerate}
        \item $[E_a w]_{A_p} = [w]_{A_p} $ and $[aw]_{A_p} = [w]_{A_p} $, for any $a > 0$.
        \item $[T_\lambda w]_{A_p} = [w]_{A_p} $, for any $\lambda \in \R$.
        \item $w^{-\frac{1}{p-1}} \in A_{p'}$, with constant $ [w^{-\frac{1}{p-1}}]_{A_{p'}} = [w]_{A_p}^{\frac{1}{p-1}}. $
        \item $[w]_{A_p} \geq 1$, and equality holds if and only if $w$ is constant.
        \item If $1 < p < q < \infty$, then $[w]_{A_q} \leq [w]_{A_p} $, so the classes $A_p$ are increasing in $p$.
        \item The $A_p$ constant coincides with the expression
        $$ [w]_{A_p} = \sup_{I } \, \sup\left\{ \frac{\big(\frac{1}{|I|} \int_I |f(x)|\, dx \big)^p}{\frac{1}{w(I)} \int_I |f(x)|^p w(x) \, dx} \, : \, 0 < \int_I |f(x)|^p w(x) \, dx < \infty\right\}. $$
        \item The measure $w(x) dx$ is doubling: for any $a > 1$ and any interval $I$, we have that $w(a I) \leq a^{p} [w]_{A_p} w(I)$.
    \end{enumerate}
\end{prop}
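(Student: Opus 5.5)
The plan is to verify the seven items in order, each by a direct manipulation of the quantity
\[
A(w,I):=\Big(\frac{1}{|I|}\int_I w\Big)\Big(\frac{1}{|I|}\int_I w^{-\frac{1}{p-1}}\Big)^{p-1},
\]
with Hölder's inequality as the main tool.

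For (1) and (2), I will use the change of variables $x=y/a$ (respectively $x=y+\lambda$). It maps the family of bounded intervals bijectively onto itself, and the normalized averages are invariant under it, so $A(E_aw,I)=A(w,I/a)$ and $A(T_\lambda w,I)=A(w,I+\lambda)$. Taking suprema gives the equalities. For the constant multiple $aw$, the factor $a$ from the first average cancels against $a^{-\frac{1}{p-1}\cdot(p-1)}=a^{-1}$ from the second.

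For (3), set $\sigma=w^{-\frac{1}{p-1}}$. Since $p'-1=\frac{1}{p-1}$, we have $\sigma^{-\frac{1}{p'-1}}=\sigma^{-(p-1)}=w$. Hence $A_{p'}(\sigma,I)=\big(\frac{1}{|I|}\int_I\sigma\big)\big(\frac{1}{|I|}\int_I w\big)^{\frac{1}{p-1}}=A(w,I)^{\frac{1}{p-1}}$, and taking suprema proves (3).

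For (4), I will write $1=w^{1/p}w^{-1/p}$ and apply Hölder with exponents $p,p'$ on $I$ with normalized measure. This gives $1\le\big(\frac{1}{|I|}\int_I w\big)^{1/p}\big(\frac{1}{|I|}\int_I w^{-\frac{p'}{p}}\big)^{1/p'}$, and since $p'/p=\frac{1}{p-1}$, raising to the $p$-th power yields $A(w,I)\ge1$. For the equality case, the constant-weight direction is immediate. Conversely, suppose $[w]_{A_p}=1$. Then $A(w,I)=1$ for every $I$, which is the equality case of Hölder, so $w$ is a.e.\ constant on each $I$; exhausting $\R$ by increasing intervals shows $w$ is constant on $\R$.

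For (5), with $q>p$ I will apply Jensen (or Hölder) to the convex function $t\mapsto t^{\frac{p-1}{q-1}\cdot\frac{q-1}{p-1}}$, i.e.\ write $w^{-\frac{1}{q-1}}=(w^{-\frac{1}{p-1}})^{\frac{p-1}{q-1}}$ with exponent $\frac{p-1}{q-1}<1$. This gives
\[
\Big(\frac{1}{|I|}\int_I w^{-\frac{1}{q-1}}\Big)^{q-1}\le\Big(\frac{1}{|I|}\int_I w^{-\frac{1}{p-1}}\Big)^{p-1},
\]
hence $A_q(w,I)\le A_p(w,I)$.

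Item (6) is the step requiring the most care. Write $S$ for the right-hand supremum. For ``$\le$'', i.e.\ $S\le[w]_{A_p}$, I apply Hölder to $\int_I|f|=\int_I|f|w^{1/p}w^{-1/p}$ to get
\[
\Big(\frac{1}{|I|}\int_I|f|\Big)^p\le\frac{1}{|I|^p}\Big(\int_I|f|^pw\Big)\Big(\int_I w^{-\frac{1}{p-1}}\Big)^{p-1}.
\]
Multiplying by $\frac{w(I)}{\int_I|f|^pw}$ gives exactly $A(w,I)$, so the ratio is bounded by $[w]_{A_p}$. For ``$\ge$'', i.e.\ $S\ge[w]_{A_p}$, the extremal choice is $f=w^{-\frac{1}{p-1}}\chi_I$. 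This is admissible: $\int_I|f|^pw=\int_I w^{-\frac{1}{p-1}}$, which is positive since $w<\infty$ a.e. It is also finite, because $w\in A_p$ implies $w^{-\frac{1}{p-1}}\in L^1_{\rm loc}$ (if needed, first truncate via $f=\min(w^{-\frac{1}{p-1}},N)\chi_I$ and let $N\to\infty$ by monotone convergence). Plugging in, the ratio equals exactly $A(w,I)$.

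Finally, (7) follows from (6). Given intervals $I\subseteq aI$, take $f=\chi_I$ on the interval $aI$. Then (6) gives
\[
\Big(\frac{|I|}{|aI|}\Big)^p\le[w]_{A_p}\frac{w(I)}{w(aI)},
\]
that is, $w(aI)\le a^p[w]_{A_p}w(I)$.
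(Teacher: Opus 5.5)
Your proposal is correct and follows the standard argument of the cited result (Grafakos, Proposition 7.1.5), which the paper invokes without proof. The steps match: change of variables for (1)--(2), exponent bookkeeping for (3), H\"older with its equality case for (4), Jensen (monotonicity of normalized $L^r$ norms) for (5), H\"older plus the extremal choice $f=w^{-\frac{1}{p-1}}\chi_I$ for (6), and (6) applied with $f=\chi_I$ on $aI$ for (7).

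There are only cosmetic slips, none of which affects the conclusions, since the interval maps are bijections and your displayed inequality in (5) is right. The averages of $E_aw$ and $T_\lambda w$ over $I$ are the averages of $w$ over $\{ax:x\in I\}$ and $I-\lambda$, not over $I/a$ and $I+\lambda$. In (5) the fact you need is concavity of $t\mapsto t^{\frac{p-1}{q-1}}$; the exponent you wrote collapses to $1$.
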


The most important property of the $A_p$ condition is given by the fact that it characterizes the boundedness of the Hardy-Littlewood maximal operator.

\begin{defo}
    For any function $f\in L^1_{\text{loc}}(\R)$, define the Hardy-Littlewood maximal operator as 
    $$ Mf(x) = \sup_{x\in I}\Big\{ \frac{1}{|I|} \int_I |f(y)| \, dy \Big\}, $$
    where the supremum is taken on all intervals $I \subseteq \R$ containing the point $x$.
\end{defo}

\begin{thm}[{\cite[Theorem 9]{muck}}]
    Let $w$ be a weight on $\R$ and $1 < p < \infty$. Then, $M$ is bounded on $L^p(w)$ if and only if $w\in A_p$.
\end{thm}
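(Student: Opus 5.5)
The plan is to prove the two implications of Muckenhoupt's theorem separately: necessity by testing the operator on localized functions, and sufficiency through a weak-type estimate, the reverse Hölder inequality and Marcinkiewicz interpolation.

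\emph{Necessity.} Assume $M$ is bounded on $L^p(w)$ with norm $C$. Fix a bounded interval $I$ and a nonnegative $f$ supported in $I$. For every $x\in I$ we have $Mf(x)\ge \frac{1}{|I|}\int_I f$, so
\[
\Big(\frac{1}{|I|}\int_I f\Big)^p w(I)\le \int_\R (Mf)^p w \le C^p\int_I f^p w .
\]
Now take $f=(w+\varepsilon)^{-\frac{1}{p-1}}\chi_I$ and let $\varepsilon\to0$; this regularization makes $\int_I f^pw$ finite. The result is
\[
\Big(\frac{1}{|I|}\int_I w\Big)\Big(\frac{1}{|I|}\int_I w^{-\frac{1}{p-1}}\Big)^{p-1}\le C^p .
\]
The same computation, with $I$ bounded, also shows that $w^{-1/(p-1)}$ is locally integrable. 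Hence $w\in A_p$, and this is the characterization (6) of Proposition~\ref{elementaryap}.

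\emph{Sufficiency.} Let $w\in A_p$.

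\textbf{Step 1: weak type $(p,p)$.} Use a Vitali/Besicovitch covering (in $\R$ it suffices to extract from the covering intervals a subfamily with bounded overlap) together with property (6). Each interval $I$ with $\frac{1}{|I|}\int_I|f|>\lambda$ satisfies $w(I)\le [w]_{A_p}\lambda^{-p}\int_I|f|^pw$. Summing over the subfamily gives
\[
w(\{Mf>\lambda\})\lesssim [w]_{A_p}\lambda^{-p}\|f\|_{L^p(w)}^p .
\]

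\textbf{Step 2: openness of $A_p$.} Show that $w\in A_{p-\varepsilon}$ for some $\varepsilon>0$. This is the main obstacle. I would first prove a reverse Hölder inequality for $\sigma=w^{-1/(p-1)}$, which lies in $A_{p'}$ by property (3). The argument is a Calderón–Zygmund decomposition at increasing heights, combined with the fact, derived from (6), that $\sigma$ and Lebesgue measure are comparable in the sense $|E|/|I|\le\delta\Rightarrow \sigma(E)/\sigma(I)\le\eta<1$. Iterating the decomposition yields $\sigma\in L^{1+\gamma}$ with averages controlled by plain averages. This gives $\big(\frac{1}{|I|}\int_I\sigma^{1+\gamma}\big)^{1/(1+\gamma)}\lesssim\frac{1}{|I|}\int_I\sigma$. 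Choosing $q$ with $\frac{1}{q-1}=\frac{1+\gamma}{p-1}$, Hölder's inequality then shows $[w]_{A_q}<\infty$ with $q<p$.

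\textbf{Step 3: interpolation.} By Step 1 applied with $q$, $M$ is of weak type $(q,q)$ on $w\,dx$. Trivially $M$ is bounded on $L^\infty(w)$. Marcinkiewicz interpolation between these two estimates gives strong type $(p,p)$, which completes the proof.
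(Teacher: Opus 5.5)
Your proof is correct. The paper gives no proof of this statement; it only cites it from Muckenhoupt, with \cite[Section 7.1]{grafakos} as its general reference. What you have written is the standard argument from those sources. Necessity comes from testing on $(w+\varepsilon)^{-\frac{1}{p-1}}\chi_I$; this tacitly uses $w(I)>0$, which holds under the paper's convention that weights are a.e.\ positive. Sufficiency comes from the weak-type bound via (6) of Proposition~\ref{elementaryap}, then $w\in A_q$ for some $q<p$ via the reverse H\"older inequality for $w^{-\frac{1}{p-1}}$, then Marcinkiewicz interpolation with the trivial $L^\infty$ bound.
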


As we stated in the introduction, the same result holds for the Hilbert transform in this range of exponents:

\begin{thm}[{\cite[Theorem 9]{huntmuckwhee}}]
    Let $w$ be a weight on $\R$ and $1 < p < \infty$. Then, $H$ is bounded on $L^p(w)$ if and only if $w\in A_p$.
\end{thm}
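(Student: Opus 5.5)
The plan proves the two implications separately: necessity by testing the operator on functions supported on an interval and evaluating $Hf$ on an adjacent interval; sufficiency by comparing $H$ with $M$ through a good-$\lambda$ inequality and then invoking Muckenhoupt's theorem stated above.

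\emph{Necessity.} Assume $\|Hf\|_{L^p(w)}\le C\|f\|_{L^p(w)}$ for all $f$.
\begin{enumerate}
\item Fix a bounded interval $I$ and let $I'$ be an adjacent interval with $|I'|=|I|$.
\item Take any $f\ge 0$ supported in $I$. For $x\in I'$ and $y\in I$, the quantity $x-y$ has constant sign and satisfies $|x-y|\le 2|I|$. Hence $|Hf(x)|\ge \frac{1}{2|I|}\int_I f$ for every $x\in I'$.
\item Integrating $|Hf|^p w$ over $I'$ gives
\[
w(I')\Bigl(\frac{1}{|I|}\int_I f\Bigr)^p\le 2^pC^p\int_I f^p w .
\]
\item With $f=\chi_I$ this yields $w(I')\lesssim w(I)$. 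Exchanging the roles of $I$ and $I'$ gives $w(I)\lesssim w(I')$, so adjacent intervals of equal length carry comparable $w$-mass.
\item Substituting $w(I)\lesssim w(I')$ into the inequality of step 3 gives
\[
\frac{w(I)}{|I|^p}\Bigl(\int_I f\Bigr)^p\lesssim \int_I f^p w .
\]
\item Choose $f=(w+\varepsilon)^{-1/(p-1)}\chi_I$, then let $\varepsilon\to0$ by monotone convergence. The $\varepsilon$ is there because $w^{-1/(p-1)}$ is not known a priori to be locally integrable.
\item The result is $\bigl(\frac{1}{|I|}w(I)\bigr)\bigl(\frac{1}{|I|}\int_I w^{-1/(p-1)}\bigr)^{p-1}\lesssim 1$ uniformly in $I$, i.e.\ $w\in A_p$. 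This matches the characterization in Proposition~\ref{elementaryap}(6).
\end{enumerate}

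\emph{Sufficiency.} Assume $w\in A_p$. The tool is a Coifman--Fefferman good-$\lambda$ inequality between the maximal truncated Hilbert transform $H^*$ and $M$.
\begin{enumerate}
\item From the $A_p$ condition, derive the $A_\infty$ property: there exist $C,\delta>0$ with $w(E)\le C(|E|/|I|)^\delta w(I)$ for all measurable $E\subseteq I$. This follows from the reverse H\"older inequality for $A_p$ weights, which I would prove via a Calder\'on--Zygmund stopping-time argument together with the doubling property in Proposition~\ref{elementaryap}(7).
\item Take a Whitney decomposition of the open set $\{H^*f>\lambda\}$ into intervals $J$, each of which has a point $\tilde x_J$ nearby with $H^*f(\tilde x_J)\le\lambda$.
\item On each $J$, split $f=f\chi_{3J}+f\chi_{(3J)^c}$. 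The smoothness of the kernel $1/(x-y)$ controls the far part: $H^*(f\chi_{(3J)^c})\le\lambda+C\,Mf$ on $J$. The unweighted weak $(1,1)$ bound for $H^*$ applied to the near part gives
\[
\bigl|\{x\in J: H^*f>2\lambda,\ Mf\le\gamma\lambda\}\bigr|\le C\gamma|J| .
\]
\item By the $A_\infty$ property from step 1, this passes to $w$: $w(\{H^*f>2\lambda, Mf\le\gamma\lambda\})\le C\gamma^\delta\, w(\{H^*f>\lambda\})$.
\item Multiply by $p\lambda^{p-1}$, integrate in $\lambda$, and choose $\gamma$ small so the $H^*$ term can be absorbed into the left side. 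This yields $\|H^*f\|_{L^p(w)}\lesssim\|Mf\|_{L^p(w)}$.
\item Muckenhoupt's theorem gives $\|Mf\|_{L^p(w)}\lesssim\|f\|_{L^p(w)}$. Since $|Hf|\le H^*f$ a.e., $H$ is bounded on $L^p(w)$.
\end{enumerate}

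\emph{Expected obstacle.} The main difficulty is the sufficiency direction, for two reasons.
\begin{itemize}
\item The reverse H\"older/$A_\infty$ step needs care.
\item The absorption in step 5 requires knowing a priori that $\|H^*f\|_{L^p(w)}<\infty$.
\end{itemize}
For the second point I would restrict to bounded compactly supported $f$ and truncate $H^*$ at large scales and large heights, then remove the truncations by monotone convergence. A fallback is the sharp-function route: prove the pointwise bound $M^{\#}(Hf)\lesssim M_r f$ with $r>1$, use the openness $A_p\subseteq A_{p/r}$ for $r$ close to $1$, and apply the Fefferman--Stein inequality.
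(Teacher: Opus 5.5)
\emph{The paper gives no proof of this statement.} It is quoted from Hunt--Muckenhoupt--Wheeden \cite[Theorem 9]{huntmuckwhee} as background. Its discrete analogue \cite[Theorem 10]{huntmuckwhee} is what drives the sufficiency half of Theorem~\ref{thm:main}. Judged on its own merits, your outline is a correct version of the standard argument.

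\textbf{Necessity.} This is the same testing idea the paper uses for $S$ in Theorem~\ref{thm:main}, but it is simpler for $H$.
\begin{itemize}
\item On adjacent intervals the kernel $1/(x-y)$ has a fixed sign, so none of the mod-$2\pi$ bookkeeping with the sets $I^j_{k,l}$ is needed.
\item You compare $w(I')$ with $w(I)$ and then test on $I$ itself. So you do not need the duality step that the paper uses to get comparability for $w^{-1/(p-1)}$.
\item Your regularization $f=(w+\varepsilon)^{-1/(p-1)}\chi_I$ is the right move. The paper's analogous step, testing with $w^{-1/(p-1)}\chi_{J_{k,l}}$, tacitly assumes that this function lies in $L^p(w)$.
\end{itemize}

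\textbf{Sufficiency.} Your good-$\lambda$ inequality between $H^*$ and $M$, followed by Muckenhoupt's theorem, is the Coifman--Fefferman route. Its key input, the $A_\infty$/reverse H\"older property, is where small scales enter. The paper's method for $S$ cannot supply this. There, the part of the kernel within distance $2\pi$ is harmless because $|\mathrm{sinc}|\le 1$, which is why $A_{p,1}$ suffices for $S$; for $H$ that part is singular.

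\textbf{A priori finiteness.} You flag this, and your fix works with one clarification: the large-scale truncation must be applied to the kernel. For example, use $\sup_{\varepsilon<R}\bigl|\int_{\varepsilon<|x-y|<R}\frac{f(y)}{x-y}\,dy\bigr|$.
\begin{itemize}
\item This operator is compactly supported when $f$ is.
\item It obeys the same good-$\lambda$ estimate uniformly in $R$; the sharp cutoff at $R$ only costs an extra $Mf$ term.
\item Merely restricting the range of $\varepsilon$ in $H^*$ would leave the $|x|^{-1}$ tail untouched.
\end{itemize}
Alternatively, the reverse H\"older inequality from your first sufficiency step gives two facts: $w\in L^{1+\delta}_{\mathrm{loc}}$, and the openness $A_p\subseteq A_{p-\eta}$, hence $\int_{\R}w(x)(1+|x|)^{-p}\,dx<\infty$. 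Together these show directly that $H^*f\in L^p(w)$ for bounded compactly supported $f$.
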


\subsection{The discrete Muckenhoupt condition}

On discrete weighted spaces, we can define an analogous class of $A_p$ weights with similar properties.

\begin{defo}
    Let $w = \{w_k\}_{k\in\Z}$ be a weight on $\Z$. We say that $w$ satisfies the discrete $A_p^d$ condition, $w\in A_p^d$, if
    \begin{equation} \label{apd}
        [w]_{A_p^d} = \sup_J \left( \frac{1}{\# J} \sum_{k\in J} w_k \right) \left(\frac{1}{\# J} \sum_{k\in J} w_k^{-\frac{1}{p-1}} \right)^{p-1} < \infty,
    \end{equation}
    where the supremum is taken over all finite intervals $J\subseteq\Z$, and $\# J$ denotes the cardinality of the set $J$. We can also write this condition as
    $$[w]_{A_p^d} = \sup_{m\leq l} \left( \frac{1}{l-m+1} \sum_{k=m}^l w_k \right) \left(\frac{1}{l-m+1} \sum_{k=m}^l w_k^{-\frac{1}{p-1}} \right)^{p-1} < \infty.$$
\end{defo}

Many properties are shared with the continuous setting, and they can be derived by repeating the proofs or by observing that, for a discrete weight $w = \{w_k\}_k$,
$$ [w]_{A_p^d} \approx [\Tilde{w}]_{A_p}, \quad \text{where} \quad \Tilde{w}(x) = \sum_{k\in\Z} w_k \chi_{[k,k+1)}(x). $$
We also refer back to the general development in \cite{stromtorch}.

Hence, for the sake of brevity, we will only state the result we will need regarding the discrete Hilbert transform.

\begin{defo}
    Let $a = (a_k)_{k\in\Z}$ be a sequence. Its discrete Hilbert transform is the sequence $\H a$ with general term
    $$ (\H a)_k = \sum_{j\neq k} \frac{a_j}{k-j}, $$
    if the series converges.
\end{defo}

\begin{thm}[{\cite[Theorem 10]{huntmuckwhee}}] 
    Let $1 < p < \infty$ and let $w = \{w_k\}_{k\in\Z}$ be a discrete weight. Then, $\H$ is bounded on $\ell^p(w)$ if and only if $w\in A_p^d$.
\end{thm}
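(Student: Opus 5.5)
The plan is to transfer the problem to the real line through the step-function embedding, which lets us use the continuous theory quoted above (boundedness of $H$ and $M$ on $L^p$ of $A_p$ weights). To a discrete weight $w=\{w_k\}$ we associate $\Tilde w(x)=\sum_k w_k\chi_{[k,k+1)}(x)$, and to a sequence $a$ the step function $f_a=\sum_j a_j\chi_{[j,j+1)}$. Then $\|f_a\|_{L^p(\Tilde w)}=\|a\|_{\ell^p(w)}$, and we use the equivalence $[w]_{A_p^d}\approx[\Tilde w]_{A_p}$ recalled above.

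\emph{Sufficiency.} Assume $w\in A_p^d$, so $\Tilde w\in A_p$. For $x\in[k,k+1)$ write
\[
Hf_a(x)=\sum_j a_j\int_j^{j+1}\frac{dy}{x-y}.
\]
\begin{enumerate}
\item The diagonal term is $a_k\log\frac{x-k}{k+1-x}$, and its average over $[k,k+1)$ vanishes.
\item For $j\ne k$, the average over $x\in[k,k+1)$ of $\int_j^{j+1}\frac{dy}{x-y}$ equals $\frac1{k-j}+c_{kj}$. Taylor expansion gives $|c_{kj}|\lesssim (1+|k-j|)^{-2}$ when $|k-j|\ge2$. For $j=k\pm1$ the kernel has only a logarithmic singularity at the common endpoint, which is integrable, so $|c_{kj}|\lesssim1$ there as well.
\end{enumerate}
Hence
\[
(\H a)_k=\int_k^{k+1}Hf_a\,dx-\sum_{j\neq k}c_{kj}a_j .
\]
The first term is controlled by Jensen's inequality:
\[
\sum_k\Bigl|\int_k^{k+1}Hf_a\Bigr|^p w_k\le\int|Hf_a|^p\Tilde w\lesssim\|a\|_{\ell^p(w)}^p,
\]
using the Hunt--Muckenhoupt--Wheeden theorem for $H$. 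The error term is bounded by $\sum_j |a_j|(1+|k-j|)^{-2}\lesssim \inf_{x\in[k,k+1)} M f_a(x)$. Its $\ell^p(w)$ norm is therefore at most $\|Mf_a\|_{L^p(\Tilde w)}\lesssim\|a\|_{\ell^p(w)}$ by Muckenhoupt's theorem.

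\emph{Necessity.} Assume $\|\H a\|_{\ell^p(w)}\le C\|a\|_{\ell^p(w)}$. Fix a finite interval $J$ with $\#J=n$, and let $J'$ be the adjacent interval of the same length to its right. Set $\sigma_k=w_k^{-1/(p-1)}$.
\begin{enumerate}
\item If $a\ge0$ is supported on $J$, then for $k\in J'$ every term of $(\H a)_k$ is positive with $0<k-j\le 2n$. So $(\H a)_k\ge\frac1{2n}\sum_{J}a_j$.
\item Taking $a=\chi_J$ gives $w(J')\lesssim w(J)$. The reflected argument, with $J'$ on the left and the sign flipped, together with swapping the roles of $J$ and $J'$, gives $w(J)\approx w(J')$. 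Since the constants are uniform, this yields doubling of $w$ on $\Z$.
\item Taking $a=\sigma\chi_{J_N}$, where $J_N=\{j\in J:\sigma_j\le N\}$ to guarantee finiteness, and then letting $N\to\infty$, gives
\[
\Bigl(\frac{\sigma(J)}{n}\Bigr)^p w(J')\lesssim\sigma(J).
\]
Replacing $w(J')$ by $w(J)$ via doubling produces $\frac{w(J)}{n}\bigl(\frac{\sigma(J)}{n}\bigr)^{p-1}\lesssim1$, which is $w\in A_p^d$. The case $n=1$ is trivial.
\end{enumerate}

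The main obstacle I expect is in necessity: handling possibly vanishing or infinite weights, so that $\sigma(J)=\infty$ has to be excluded (the truncation shows $\sigma(J)<\infty$ whenever $w(J')>0$). The other delicate point is making the doubling comparison $w(J')\approx w(J)$ rigorous with uniform constants. The neighbouring-cell estimates for $c_{k,k\pm1}$ are routine but need care.
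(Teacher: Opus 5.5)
Your proof is correct. The paper gives no proof of this statement: it quotes it from Hunt--Muckenhoupt--Wheeden and uses it as a black box in the sufficiency part of Theorem~\ref{thm:main}. So your argument is a genuinely different route, namely a derivation from the two continuous theorems the paper also quotes (for $H$ and for $M$).

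You go through the step weight $\Tilde{w}$ and the equivalence $[w]_{A_p^d}\approx[\Tilde{w}]_{A_p}$. That equivalence does hold: an interval of length at most $1$ meets at most two cells, and a longer one lies in a union of cells of at most three times its length.

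Your transference runs opposite to the paper's. In Theorem~\ref{thm:main} the continuous operator $H_{2\pi}$ is pushed down to $\H$ via $V$, with the error absorbed by $P$. This is forced there because an $A_{p,1}$ weight need not be in $A_p$, so only the discrete theorem (for $w_d$) is available. In your situation $\Tilde{w}$ is a genuine $A_p$ weight, so you can lift $\H$ to $H$ and absorb the error with $M$. Combining the two, the sufficiency half of Theorem~\ref{thm:main} rests ultimately on the continuous theorem for the step weight built from $w_d$.

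Your necessity half is the standard adjacent-block test. It is the same mechanism the paper uses for $S$: a doubling comparison from $\chi$ of a block, then the mixed estimate from $w^{-\frac{1}{p-1}}\chi$ on a shifted block. For $\H$ it is simpler, since $1/(k-j)$ has constant sign between adjacent blocks, so none of the phase bookkeeping with the sets $I_{k,l}^j$ and $l\notin 4\N$ is needed.

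Two small corrections.

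First, the truncation $J_N$ is unnecessary when $0<w_k<\infty$, since $J$ is finite. It also cannot do what your last paragraph claims: an index with $\sigma_j=\infty$ never enters any $J_N$, so letting $N\to\infty$ only recovers the sum over indices with $\sigma_j<\infty$. Zeros are excluded instead by your step 2, since $w_{j\pm1}\lesssim w_j$ means a single zero forces $w\equiv 0$.

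Second, in the sufficiency part, run the computation for finitely supported $a$ (all sums are then finite and $f_a$ is bounded with compact support) and extend by density. To identify the extension with the series defining $\H a$, note that the series converges absolutely. By H\"older it suffices that $\sum_{j\neq k}\sigma_j|k-j|^{-p'}<\infty$. On $J_R=[k-2^R,k+2^R]$ the $A_p^d$ condition gives $2^{-Rp'}\sigma(J_R)\lesssim w(J_R)^{1-p'}$, and doubling makes $w(J_R)$ grow geometrically in $R$, so the sum converges.
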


\subsection{Some useful operators}

We also introduce some operators that will be necessary in the proofs of the following results.

\begin{defo} 
    For $s>0$, define the moving average operators:
    
    \begin{equation}\label{movingavg}
        \begin{split}
            &A_sf(x) = \frac{1}{s} \int_x^{x+s} f(t) \, dt,  \qquad  A_{-s}f(x) = \frac{1}{s} \int^x_{x-s} f(t) \, dt, \\
            &B_s f(x) = \frac{1}{2s} \int^{x+s}_{x-s} f(t) \, dt = \frac{A_sf(x) + A_{-s}f(x)}{2}.
        \end{split}
    \end{equation}
\end{defo}

\begin{rem}
    All three operators in (\ref{movingavg}) are of convolution type, and their kernels are, respectively:
    \begin{align*}
        a_s(t) &= \frac{1}{s} \chi_{(-s,0)}(t), \quad a_{-s}(t) = \frac{1}{s} \chi_{(0,s)}(t), \quad \text{and} \quad b_s(t) = \frac{1}{2s} \chi_{(-s,s)}(t).
    \end{align*}
\end{rem}

We will prove some properties about these operators in general function spaces.

\begin{defo}
    Let $X$ be a normed space of functions. We say that its norm is monotone if $|f| \leq |g|$ a.e.\ implies $\|f\|_X \leq \|g\|_X$.
    
    We say that the norm has the Fatou property if the following is satisfied: for any sequence $\{f_n\}_{n\in\N} \subseteq X$ such that $|f_n| \nearrow |f|$ a.e., we have $\|f_n\|_X \nearrow \|f\|_X$.
\end{defo}

\begin{rem}
    Given any weight $w$ and $1 < p < \infty$, the norm of $L^p(w)$ is monotone and has the Fatou property.
\end{rem}

\begin{prop}
    Let $X$ be a Banach space of functions with monotone norm. If $B_s$ is bounded on $X$ for every $s>0$, then for any bounded measurable set $E\subseteq \R$ there exists a constant $C_E > 0$ such that
    $$\|\chi_E * f\|_{X} \leq C_E \|f\|_{X};$$
    that is, the operator given by $ f \mapsto \chi_E * f $ is bounded on $X$.
\end{prop}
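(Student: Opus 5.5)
Since $E$ is bounded, pick $s>0$ with $E\subseteq(-s,s)$. Then $\chi_E\le\chi_{(-s,s)}=2s\,b_s$ pointwise, and for every $x$
\[
|\chi_E*f(x)|=\Bigl|\int_\R \chi_E(x-t)f(t)\,dt\Bigr|\le\int_\R \chi_{(-s,s)}(x-t)|f(t)|\,dt = 2s\,B_s|f|(x).
\]
Monotonicity of the norm then gives
\[
\|\chi_E*f\|_X\le 2s\,\|B_s|f|\|_X\le 2s\,\|B_s\|_{X\to X}\,\||f|\|_X .
\]

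It remains to check $\||f|\|_X=\|f\|_X$. Applying monotonicity in both directions, $|f|\le||f||$ and $||f||\le|f|$, yields equality, so in particular $|f|\in X$. This gives the claim with $C_E=2s\|B_s\|_{X\to X}$.

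One detail needs care: the convolution $\chi_E*f$ must be defined a.e. This follows from the pointwise bound above as soon as $B_s|f|$ is finite a.e., which holds because $B_s|f|\in X$ and functions in a Banach function space are finite a.e. (for $X=L^p(w)$ this is immediate). Since the whole argument is a pointwise domination followed by monotonicity, I expect no real obstacle. The only subtle point is that $B_s$ has to be applied to $|f|$ rather than $f$, and the boundedness hypothesis on $B_s$ covers that.
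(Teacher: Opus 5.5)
Your proof is correct and is essentially the paper's argument: you dominate $|\chi_E*f|\le \chi_{(-s,s)}*|f| = 2s\,B_s|f|$ pointwise and then use monotonicity and the boundedness of $B_s$, giving $C_E = 2s\|B_s\|_{X\to X}$. Your extra remarks, that $\||f|\|_X=\|f\|_X$ and that $\chi_E*f$ is defined a.e., spell out points the paper uses without comment.
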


\begin{proof}
    Let $f\in X$ and suppose that $E \subseteq I = (-R,R)$. Taking the absolute value, we find that
    $$|\chi_E * f| \leq \chi_E*|f| \leq \chi_I*|f| = 2R B_R|f|. $$
    Then, $\|\chi_E*f\|_{X} \leq 2R \|B_R\|_{X} \|f\|_{X},$ where $\|B_R\|_{X}$ denotes the operator norm of $B_R$.
\end{proof}

\begin{cor} \label{bbs}
    Let $X$ be a Banach space of functions with monotone norm and suppose that $B_s$ is bounded on $X$ for each $s>0$. Then, if $f\in L^\infty(\R)$ with bounded support, the convolution with $f$ is bounded on $X$.
\end{cor}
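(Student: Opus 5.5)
The plan is to reduce the corollary to the preceding proposation by pointwise domination. Let $f\in L^\infty(\R)$ have bounded support, and choose a bounded interval $E=(-R,R)$ containing the support of $f$. Then
$$|f| \le \|f\|_\infty \,\chi_E \quad \text{a.e.}$$
For any $g\in X$ we have, at every point where the integrals make sense,
$$|f*g| \le |f|*|g| \le \|f\|_\infty\, \chi_E*|g|.$$

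Next we use the structure of $X$. Since the norm is monotone, $\|g\|_X=\||g|\|_X$, so $|g|\in X$. Applying monotonicity to the inequality above gives
$$\|f*g\|_X \le \|f\|_\infty \|\chi_E*|g|\|_X.$$
The preceding proposition, applied to the function $|g|\in X$, bounds the right-hand side by $\|f\|_\infty C_E\|g\|_X$. Explicitly, since $\chi_E*|g| = 2R\,B_R|g|$, the constant is $C_E=2R\|B_R\|_X$.

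The only delicate point is that $f*g$ must be well defined, i.e.\ finite almost everywhere. This follows from the bound itself. We have $|f|*|g| \le 2R\|f\|_\infty B_R|g|$, and $B_R|g|\in X$. For a Banach function space, elements are finite a.e., so $B_R|g|<\infty$ a.e. Hence the integral defining $f*g$ converges absolutely a.e., and the estimate holds. This gives boundedness of $g\mapsto f*g$ on $X$ with norm at most $2R\|f\|_\infty\|B_R\|_X$.
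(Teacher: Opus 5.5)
Your proposal is correct and is essentially the paper's argument. The paper gives no separate proof of the corollary and treats it as immediate from the preceding proposition, whose proof is exactly the domination $|\chi_E*g|\le \chi_{(-R,R)}*|g| = 2R\,B_R|g|$ followed by monotonicity of the norm; you add the step $|f|\le \|f\|_\infty\chi_{(-R,R)}$ to get the bound $2R\|f\|_\infty\|B_R\|_X$, and your remark on a.e.\ finiteness of $f*g$ is a harmless detail the paper does not spell out.
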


\begin{cor}
    Let $X$ be a Banach space of functions with monotone norm and let $s>0$. Then, $B_s$ is bounded on $X$ if and only if both $A_s$ and $A_{-s}$ are bounded on $X$.
\end{cor}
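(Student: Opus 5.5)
The equivalence splits into two implications, each reduced to a pointwise comparison and the monotonicity of the norm of $X$.

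For the ``if'' direction, assume $A_s$ and $A_{-s}$ are bounded on $X$. For $f\in X$ we have the pointwise identity
$$B_s f = \frac{A_sf + A_{-s}f}{2},$$
which was already recorded in (\ref{movingavg}). The triangle inequality in the Banach space $X$ then gives
$$\|B_sf\|_X \le \tfrac12\big(\|A_s\|_X+\|A_{-s}\|_X\big)\|f\|_X.$$
This direction does not use monotonicity at all.

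For the ``only if'' direction, assume $B_s$ is bounded on $X$. The plan is to dominate $|A_{\pm s}f|$ pointwise by a multiple of $B_s|f|$. Since $(x,x+s)\subseteq(x-s,x+s)$, we get
$$|A_sf(x)| \le \frac1s\int_x^{x+s}|f(t)|\,dt \le \frac1s\int_{x-s}^{x+s}|f(t)|\,dt = 2B_s|f|(x).$$
The same bound holds for $A_{-s}$, because $(x-s,x)\subseteq(x-s,x+s)$.

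Two properties of the norm are then needed. First, $|f|\in X$ with $\||f|\|_X=\|f\|_X$; this follows from monotonicity applied to $|f|\le||f||$ and $||f||\le|f|$. Second, monotonicity applied to the pointwise bound gives
$$\|A_{\pm s}f\|_X\le 2\|B_s|f|\|_X\le 2\|B_s\|_X\|f\|_X.$$
This domination is essentially the argument in the proof of the preceding proposition with $E=(0,s)$ or $E=(-s,0)$.

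The only delicate point is whether $A_{\pm s}f$ is a priori defined and measurable for $f\in X$. This requires $f$ to be locally integrable, which we implicitly assume for a function space on which $B_s$ acts. Since $B_s|f|$ is finite a.e., local integrability of $|f|$ follows anyway. No other obstacle is expected.
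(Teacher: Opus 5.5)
Your proof is correct and follows essentially the argument the paper intends. The ``if'' direction is the identity $B_s=\frac12(A_s+A_{-s})$ plus the triangle inequality. The ``only if'' direction is the pointwise domination $|A_{\pm s}f|\le 2B_s|f|$, taken from the proof of the preceding proposition with $E=(-s,0)$ or $(0,s)$ and $R=s$. That argument needs boundedness of $B_s$ only for this one $s$, even though the proposition itself is stated for all $s>0$.
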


In the proof of Theorem \ref{thm:main}, we will frequently use the Poisson operator, whose boundedness can be derived from that of the averaging operators in (\ref{movingavg}). This is a result that also appears in \cite{bkls} but, for completeness, we recall it here.

\begin{defo} \label{def:poisson}
    We denote by $P$ the convolution operator with respect to the Poisson kernel, $\frac{1}{x^2 + 1}$:
    $$ Pf(x) = \int_\R \frac{f(y)}{1+ (x-y)^2} \, dy. $$
\end{defo}

\begin{lemma}\label{poisson}
    Let $X$ be a Banach space of functions, whose norm $\|\cdot\|_X$ is monotone and has the Fatou property. Suppose that the operators $\{B_s\}_{s\geq 1}$ are uniformly bounded on $X$, $\|B_s\|\leq C$, with $C$ independent of $s\geq 1$. Then, $P$ is bounded on $X$.
\end{lemma}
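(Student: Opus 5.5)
The plan is to write the Poisson kernel as a superposition of the box kernels $b_s$ with $s \geq 1$, plus a piece that is controlled by $b_1$. Then we use monotonicity and the Fatou property to pass the uniform bound through the integral, working with $|f|$ throughout. Pointwise we have $|Pf| \leq P|f|$, so it suffices to treat $f \geq 0$.

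\textbf{Step 1: the pointwise kernel bound.} The kernel $\phi(t) = \frac{1}{1+t^2}$ is even and decreasing in $|t|$, so it is a layer-cake mixture of symmetric boxes:
$$
\phi(t) = \int_0^\infty \chi_{(-s,s)}(t)\,(-\phi'(s))\,ds = \int_0^\infty 2s\,b_s(t)\,(-\phi'(s))\,ds .
$$
Rather than keep the part with $s<1$, we bound $\phi \leq \chi_{(-1,1)} + \phi\,\chi_{\{|t|\geq 1\}}$. For $|t| \geq 1$ we have
$$
\phi(t) = \int_{|t|}^\infty (-\phi'(s))\,ds \leq \int_1^\infty \chi_{(-s,s)}(t)\,(-\phi'(s))\,ds .
$$
Since $-\phi'(s) = \frac{2s}{(1+s^2)^2}$, this gives the pointwise bound
$$
\phi(t) \leq 2\,b_1(t) + \int_1^\infty b_s(t)\,\frac{4s^2}{(1+s^2)^2}\,ds .
$$
The weight $\mu(s) = \frac{4s^2}{(1+s^2)^2}$ is integrable on $[1,\infty)$, with total mass at most $\int_1^\infty 4s^{-2}\,ds = 4$.

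\textbf{Step 2: reduce to a Riemann sum.} Convolving the kernel bound with $f \geq 0$ and using Tonelli gives
$$
Pf(x) \leq 2B_1f(x) + \int_1^\infty B_sf(x)\,\mu(s)\,ds .
$$
We cannot apply Minkowski's integral inequality directly in an abstract $X$. Instead we truncate and discretize. For $N \in \N$ and $f \geq 0$, we approximate $\int_1^N B_sf\,\mu(s)\,ds$ by sums $\sum_j B_{s_j}f\,\mu(s_j)\,\Delta_j$.

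To avoid continuity issues, it is cleaner to dominate the integral by a dyadic sum. For $s \in [2^k, 2^{k+1}]$ we have $b_s \leq 2\,b_{2^{k+1}}$, since $\frac{1}{2s} \leq \frac{2}{2\cdot 2^{k+1}}$ and $(-s,s) \subseteq (-2^{k+1},2^{k+1})$. Hence
$$
\int_1^\infty b_s\,\mu(s)\,ds \leq \sum_{k\geq 0} 2\,b_{2^{k+1}} \int_{2^k}^{2^{k+1}} \mu \leq \sum_{k\geq 0} C\,2^{-k}\,b_{2^{k+1}} .
$$
This yields the pointwise bound
$$
Pf \leq 2B_1f + C\sum_{k\geq 0} 2^{-k}\,B_{2^{k+1}}f .
$$

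\textbf{Step 3: conclude with monotonicity and Fatou.} The partial sums $g_N = 2B_1f + C\sum_{k\leq N} 2^{-k}B_{2^{k+1}}f$ increase to $g$. Each $g_N$ lies in $X$, and by the triangle inequality
$$
\|g_N\|_X \leq \Bigl(2 + C\sum_k 2^{-k}\Bigr)\,C_0\,\|f\|_X ,
$$
where $C_0$ is the uniform bound on $\|B_s\|$ for $s \geq 1$. The Fatou property then gives $\|g\|_X \leq C'\|f\|_X$. Monotonicity gives $\|Pf\|_X \leq \|g\|_X$, which proves the lemma. For general $f$ we use $|Pf| \leq P|f|$ and the fact that $\| |f| \|_X = \|f\|_X$, which follows from monotonicity.

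\textbf{Main obstacle.} The only delicate point is handling the continuous superposition in an abstract normed space, including the measurability of $s \mapsto B_sf$. The dyadic domination in Step 2 sidesteps this entirely: only a countable sum appears, so the Fatou property alone suffices. The remaining verifications are the elementary box inclusions and the tail estimate $\int_{2^k}^{2^{k+1}} \mu \lesssim 2^{-k}$.
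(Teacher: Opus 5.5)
Your proof is correct and takes essentially the same approach as the paper. Both arguments dominate the Poisson kernel pointwise by a countable positive combination $\sum_j c_j b_{s_j}$ with $s_j \geq 1$ and $\sum_j c_j < \infty$, bound the partial sums using the uniform bound on $\|B_s\|$, and pass to the limit with the Fatou property and monotonicity. The differences are cosmetic. The paper uses integer radii, $g=\sum_{j\geq 1} b_j/j^2$, whereas you use dyadic radii derived from a layer-cake formula. You also apply Fatou directly to the increasing partial sums for $f\geq 0$, rather than first constructing $G$ as the norm limit of a Cauchy sequence.
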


\begin{proof}
    For $j\in \N$, let $b_j = \frac{\chi_{(-j,j)}}{2j}$ the convolution kernel of $B_j$, and define $g_n = \sum_{j=1}^n b_j/j^2. $ Since $\{B_j\}_j$ are uniformly bounded, the operator $G_n f = g_n * f = \sum_{j=1}^n B_j f /j^2$ is bounded on $X$. Moreover, $\{g_n*f\}_n$ is a Cauchy sequence: if $m > n$, the difference in their corresponding terms is $G_m f - G_n f = \sum_{j=n+1}^m B_j f/j^2$, so
    $$ \|G_m f - G_n f\|_X \leq \sum_{j=n+1}^m \frac{1}{j^2} \|B_j f\|_X \leq C\|f\|_X \sum_{j=n+1}^m \frac{1}{j^2}. $$
    Then, we can define the operator $Gf := \lim_n g_n*f$. Because of the Fatou property, we get that
    $$\|Gf\|_X \leq \liminf_n \|g_n * f\|_X \leq C \sum_{j=1}^\infty \frac{1}{j^2} \|f\|_X, $$
    so $G$ is bounded. Using the monotone convergence theorem, we can see that $G$ is the convolution operator with kernel $g = \sum_{j=1}^\infty \chi_{(-j,j)}/(2j^3). $

    Note that for $k\geq 1$ we can compare $\sum_{j=k}^\infty \frac{1}{j^3} \approx \frac{1}{k^2 + 1}$. Then, if $|x| \in (k,k+1)$, we can see that
    $$g(x) = \sum_{j=k+1}^\infty \frac{1}{2j^3} \approx \frac{1}{k^2 + 1} \approx \frac{1}{x^2 + 1}.$$

    Because both $G$ and $P$ have positive kernels, we have that
    $$ |Gf| \leq G(|f|), \quad |Pf| \leq P(|f|), $$
    and also, for every non-negative function $f$, $ Pf \approx Gf. $ Hence, we obtain that
    $$ \|Pf\|_X \leq \|P(|f|)\|_X \approx \|G(|f|)\|_X \lesssim \|f\|_X, $$
    so $P$ is a bounded operator on $X$.
\end{proof}

\section{The truncated Muckenhoupt condition} \label{sec:apa}

During this work we only consider weighted $L^p(w)$ spaces over $\R$. We now introduce the following condition:

\begin{defo} \label{def:aptrunc}
    Let $w$ be a weight on $\R$. We say that $w$ satisfies the \emph{truncated} $A_p$ condition (for some level $a>0$) if we have
    $$\sup_{|I|\geq a} \left( \frac{1}{|I|} \int_I w(x) \, dx \right) \left(\frac{1}{|I|} \int_I w(x)^{-\frac{1}{p-1}}\, dx \right)^{p-1} < \infty, $$
    where the supremum is taken over all bounded intervals $I\subseteq\R$ with length $|I| \geq a$. We denote the previous supremum by $[w]_{A_{p,a}}$, and we say that $w \in A_{p,a}$.
\end{defo}

As the following proposition shows, the threshold $a$ does not have qualitative relevance:

\begin{prop}
    For every $a>0$, all $A_{p,a}$ classes coincide: $A_{p,a} = A_{p,1}. $ Moreover, for $w\in A_{p,1}$, we have that $ [w]_{A_{p,2a}} \leq [w]_{A_{p,a}} \leq 2^p [w]_{A_{p,2a}}. $
\end{prop}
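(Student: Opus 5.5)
The plan is to prove the two-sided quantitative inequality $[w]_{A_{p,2a}} \leq [w]_{A_{p,a}} \leq 2^p [w]_{A_{p,2a}}$ first. The qualitative statement $A_{p,a}=A_{p,1}$ then follows by iterating it. The left inequality is immediate: the supremum defining $[w]_{A_{p,2a}}$ runs over intervals with $|I|\geq 2a$, and these form a subfamily of the intervals with $|I|\geq a$.

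For the right inequality, it suffices to control intervals $I$ with $a\leq |I|<2a$, since intervals with $|I|\geq 2a$ are already counted in $[w]_{A_{p,2a}}$. Given such an $I$, I will take the concentric interval $2I$. It satisfies $|2I|=2|I|\geq 2a$ and $I\subseteq 2I$. Writing $\sigma=w^{-\frac{1}{p-1}}$, the monotonicity of integrals of nonnegative functions gives
$$\frac{1}{|I|}\int_I w \leq \frac{2}{|2I|}\int_{2I} w \quad\text{and}\quad \Big(\frac{1}{|I|}\int_I \sigma\Big)^{p-1}\leq 2^{p-1}\Big(\frac{1}{|2I|}\int_{2I}\sigma\Big)^{p-1}.$$
Multiplying the two bounds yields the $A_p$ product for $I$ bounded by $2^p$ times the one for $2I$, hence by $2^p[w]_{A_{p,2a}}$. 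Combining the two ranges of lengths gives $[w]_{A_{p,a}}\leq 2^p[w]_{A_{p,2a}}$, using $2^p\geq 1$.

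For the qualitative part, iterating $k$ times gives $[w]_{A_{p,a}}\leq 2^{kp}[w]_{A_{p,2^k a}}$, and the trivial inequality gives $[w]_{A_{p,2^k a}}\leq [w]_{A_{p,a}}$. I also need monotonicity in the level: $[w]_{A_{p,b}}\leq[w]_{A_{p,c}}$ whenever $b\geq c$. Given $a>0$, I choose integers $k,m\geq 0$ with $2^{-k}\leq a\leq 2^m$. Then $[w]_{A_{p,a}}\leq [w]_{A_{p,2^{-k}}}\leq 2^{kp}[w]_{A_{p,1}}$ and $[w]_{A_{p,1}}\leq [w]_{A_{p,2^{-m}}}\leq 2^{mp}[w]_{A_{p,1}\cdot}$; more precisely, $[w]_{A_{p,1}}\leq 2^{mp}[w]_{A_{p,2^m}}\leq 2^{mp}[w]_{A_{p,a}}$. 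So the finiteness of the two constants is equivalent.

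There is no real obstacle here. The only points needing care are that the doubled interval must still be a bounded interval of admissible length, and that the direction of monotonicity in the level $a$ is applied correctly.
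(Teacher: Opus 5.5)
Your proof is correct and follows essentially the paper's argument. The left inequality is the trivial inclusion of admissible intervals, and the right one comes from enlarging an interval of length at least $a$ to a containing interval of length at least $2a$ and at most twice as long, at a cost of $2^p$; you use the concentric $2I$ where the paper uses $(c,c+2a)$, which makes no difference. You also spell out the dyadic iteration that the paper calls ``a quick argument'' for $A_{p,a}=A_{p,1}$. The garbled chain $[w]_{A_{p,1}}\leq [w]_{A_{p,2^{-m}}}\leq\cdots$ should simply be deleted, since the line $[w]_{A_{p,1}}\leq 2^{mp}[w]_{A_{p,2^m}}\leq 2^{mp}[w]_{A_{p,a}}$ is the one that does the work.
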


\begin{proof}
    From the definition, it is clear that $ [w]_{A_{p,2a}} \leq [w]_{A_{p,a}} $.

    To prove the other inequality, consider a weight $w\in A_{p,2a}$ and note that Definition~\ref{def:aptrunc} is equivalent to having, for each interval $I$ with $|I| \geq 2a$,
    $$ \bigg( \int_I w(x) dx \bigg) \left( \int_{I} w(x)^{-\frac{1}{p-1}} dx \right)^{p-1} \leq [w]_{A_{p,2a}} |I|^p. $$
    Then, if $J = (c,d)$ is an interval with $a \leq |J| \leq 2a$, we have that
    \begin{align*}
        \bigg( \int_J w(x) dx \bigg) \left( \int_J w(x)^{-\frac{1}{p-1}} dx \right)^{p-1} &\leq \bigg( \int_c^{c+2a} w(x) dx \bigg) \left( \int_c^{c+2a} w(x)^{-\frac{1}{p-1}} dx \right)^{p-1} \\
        &\leq [w]_{A_{p,2a}} 2^p a^p \leq 2^p [w]_{A_{p,2a}} |J|^p.
    \end{align*}
    Thus, $w\in A_{p,a}$ and $ [w]_{A_{p,a}} \leq 2^p [w]_{A_{p,2a}} $. In particular, $A_{p,a} = A_{p,2a}$. Because for $a < b$, $A_{p,a} \subseteq A_{p,b} $, a quick argument shows that $A_{p,a} = A_{p,1}$ for any $a > 0$.
\end{proof}

The truncated $A_{p,1}$ weights satisfy many of the elementary properties of the $A_p$ class.

\begin{prop}
    Let $1 < p < \infty$, $a>0$, and $w\in A_{p,1}$. Then:
    \begin{enumerate}
        \item $[E_b w]_{A_{p,a}} = [w]_{A_{p,ba}} $ and $[bw]_{A_{p,a}} = [w]_{A_{p,a}} $, for any $b > 0$.
        \item $[T_\lambda w]_{A_{p,a}} = [w]_{A_{p,a}} $, for any $\lambda \in \R$.
        \item $w^{-\frac{1}{p-1}} \in A_{p',1}$, with constant $ [w^{-\frac{1}{p-1}}]_{A_{p',a}} = [w]_{A_{p,a}}^{\frac{1}{p-1}}. $
        \item $[w]_{A_{p,a}} \geq 1$, and equality holds if and only if $w$ is constant.
        \item If $1 < p < q < \infty$, then $[w]_{A_{q,a}} \leq [w]_{A_{p,a}} $, so the classes $A_{p,1}$ are increasing in $p$.
        \item The $A_{p,a}$ constant coincides with the expression
        $$ [w]_{A_{p,a}} = \sup_{|I| \geq a } \, \sup\left\{ \frac{\big(\frac{1}{|I|} \int_I |f(x)|\, dx \big)^p}{\frac{1}{w(I)} \int_I |f(x)|^p w(x) \, dx} \, : \, 0 < \int_I |f(x)|^p w(x) \, dx < \infty\right\}. $$
        \item For any $b > 1$ and any interval $I$, we have that $w(b I) \leq b^{p} [w]_{A_{p,|I|}} w(I)$.
    \end{enumerate}
\end{prop}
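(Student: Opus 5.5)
Each item can be checked by mimicking the proof of the corresponding statement in Proposition~\ref{elementaryap}. The only new point is to track how the length restriction $|I|\ge a$ behaves under each operation. We work throughout with the product functional
$$\Phi_w(I)=\Big(\frac{1}{|I|}\int_I w\Big)\Big(\frac{1}{|I|}\int_I w^{-\frac{1}{p-1}}\Big)^{p-1},$$
so that $[w]_{A_{p,a}}=\sup_{|I|\ge a}\Phi_w(I)$.

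\textbf{Items (1) and (2).} These follow from changes of variables.
\begin{itemize}
\item For the dilation $E_b$, the substitution $y=bx$ gives $\Phi_{E_b w}(I)=\Phi_w(bI')$, where $bI'=\{bx: x\in I\}$ has length $b|I|$. As $I$ ranges over intervals with $|I|\ge a$, the image ranges over intervals of length at least $ba$, which gives $[w]_{A_{p,ba}}$.
\item Multiplying by a constant $b$ leaves $\Phi$ unchanged, since the factors $b$ and $b^{-1}$ cancel.
\item Translation maps intervals of length $\ge a$ bijectively onto intervals of length $\ge a$.
\end{itemize}

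\textbf{Item (3).} Set $\sigma=w^{-\frac{1}{p-1}}$ and use $p'-1=\frac{1}{p-1}$, so that $\sigma^{-\frac{1}{p'-1}}=w$. Then
$$\Phi^{(p')}_\sigma(I)=\Big(\frac{1}{|I|}\int_I\sigma\Big)\Big(\frac{1}{|I|}\int_I w\Big)^{\frac{1}{p-1}}=\Phi_w(I)^{\frac{1}{p-1}}.$$
Taking the supremum over the same family of intervals gives the stated equality.

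\textbf{Item (4).} By H\"older's (or Jensen's) inequality applied to $1=w^{\frac1p}w^{-\frac1p}$ on $I$, we get $\Phi_w(I)\ge 1$. Equality in H\"older forces $w$ to be a.e.\ constant on $I$. If the supremum equals $1$, then $\Phi_w(I)=1$ for every interval of length $\ge a$. Hence $w$ is a.e.\ constant on each such interval, and since these intervals overlap and cover $\R$, $w$ is constant. The converse is immediate.

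\textbf{Item (5).} Since $\frac{1}{q-1}<\frac{1}{p-1}$, Jensen's inequality (power means are increasing) gives
$$\Big(\frac{1}{|I|}\int_I w^{-\frac{1}{q-1}}\Big)^{q-1}\le\Big(\frac{1}{|I|}\int_I w^{-\frac{1}{p-1}}\Big)^{p-1}$$
interval by interval. Taking the supremum gives $[w]_{A_{q,a}}\le[w]_{A_{p,a}}$.

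\textbf{Item (6).} The argument is the standard one, carried out for each fixed interval $I$ with $|I|\ge a$. For the upper bound, H\"older's inequality gives
$$\int_I|f|\le\Big(\int_I|f|^pw\Big)^{\frac1p}\Big(\int_I w^{-\frac{p'}{p}}\Big)^{\frac1{p'}},$$
which shows the quotient is at most $\Phi_w(I)$. For the reverse inequality, test with $f=w^{-\frac{1}{p-1}}\chi_I$, which yields exactly $\Phi_w(I)$. If $\int_I w^{-\frac{1}{p-1}}=\infty$, use truncations $f=\min(w^{-\frac{1}{p-1}},N)\chi_I$ and let $N\to\infty$. This is the only mildly delicate step, but it is identical to the classical case.

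\textbf{Item (7).} Apply (6) on the interval $bI$, which has length $\ge|I|$, with $f=\chi_I$. This gives
$$\Big(\frac{|I|}{|bI|}\Big)^p\le[w]_{A_{p,|I|}}\frac{w(I)}{w(bI)},$$
which rearranges to $w(bI)\le b^p[w]_{A_{p,|I|}}w(I)$.

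This is where the restriction matters, so it is the main point to check. For $|I|\ge a$ the constant $[w]_{A_{p,|I|}}\le[w]_{A_{p,a}}$ is controlled. For short intervals the constant $[w]_{A_{p,|I|}}$ is finite only because $A_{p,|I|}=A_{p,1}$ by the previous proposition. The statement is therefore meaningful for every $I$, provided we first note that $[w]_{A_{p,|I|}}<\infty$.
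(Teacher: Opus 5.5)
Your proposal is correct and takes the same route as the paper. The paper gives no separate proof and defers to the classical arguments behind Proposition~\ref{elementaryap}; you carry those out while tracking the restriction $|I|\ge a$, which is where the only new content lies: the threshold rescales to $ba$ under $E_b$ in (1), and in (7) the interval $bI$, of length $b|I|\ge |I|$, is admissible for $[w]_{A_{p,|I|}}$, whose finiteness you correctly get from $A_{p,|I|}=A_{p,1}$. The truncation step in (6) is harmless but unnecessary, since $w\in A_{p,1}$ already makes $w^{-\frac{1}{p-1}}$ locally integrable.
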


The proofs of these properties are not significantly different from Proposition \ref{elementaryap}, so we refer to \cite[Section 7.1]{grafakos}. The same is true for the following characterization regarding an adequate maximal operator.

\begin{defo}
    Let $a>0$. For $f\in L^1_{\text{loc}}(\R)$, define the truncated maximal operator $M_a$ as
    $$ M_a f(x) = \sup_{x \in I, \, |I| \geq a} \Big\{ \frac{1}{|I|} \int_I |f(y)|\, dy\Big\}. $$
\end{defo}

\begin{prop}
    Let $w$ be a weight on $\R$ and $1 < p < \infty$. Then, $M_a$ is bounded on $L^p(w)$ if and only if $w\in A_{p,1}$.
\end{prop}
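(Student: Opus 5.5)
We prove the two implications separately, following the classical Muckenhoupt argument and adapting it to intervals of length at least $a$. By the previous proposition all classes $A_{p,a}$ coincide with $A_{p,1}$, so we may work with the level $a$ of the operator.

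\emph{Necessity.} Assume $M_a$ is bounded on $L^p(w)$ with norm $C$. Fix an interval $I$ with $|I|\ge a$ and a nonnegative $f$ supported in $I$. For every $x\in I$, the interval $I$ itself is admissible in the supremum defining $M_a f(x)$, so
$$M_af(x)\ \ge\ \frac{1}{|I|}\int_I f.$$
Therefore
$$\Big(\frac{1}{|I|}\int_I f\Big)^p w(I)\le \|M_af\|_{L^p(w)}^p\le C^p\int_I f^pw.$$
This is the quantity appearing in property (6) of the preceding proposition, and it gives $[w]_{A_{p,a}}\le C^p$. To avoid assuming that $\sigma=w^{-1/(p-1)}$ is locally integrable, we choose $f=(\sigma+\varepsilon)^{-1}\sigma\,\chi_I$-type truncations, for instance $f=\min(\sigma,n)\chi_I$, and let $n\to\infty$. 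This also shows that $\sigma(I)<\infty$.

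\emph{Sufficiency.} Assume $w\in A_{p,a}$. We reduce the problem to the classical theorem by a localization argument.

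First, we use the doubling property (7) of the preceding proposition to compare $M_a$ with $M_a$ restricted to intervals of a discrete family. Partition $\R$ into the intervals $I_k=[ka,(k+1)a)$. Every interval $I$ with $|I|\ge a$ containing $x$ is contained in a union $J$ of consecutive $I_k$'s with $|J|\le 3|I|$. Hence
$$M_af(x)\le 3\,\mathcal M\big(\{a^{-1}\textstyle\int_{I_k}|f|\}_k\big)_{k(x)},$$
where $\mathcal M$ is the discrete maximal operator on $\Z$ and $k(x)$ is the index with $x\in I_{k(x)}$.

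Next, set $a_k=\frac{1}{a}\int_{I_k}|f|$, $W_k=w(I_k)$ and $\sigma_k=\sigma(I_k)$. By H\"older's inequality,
$$a_k^p\le a^{-p}\Big(\int_{I_k}|f|^pw\Big)\,\sigma_k^{p-1},$$
so
$$\|M_af\|_{L^p(w)}^p\lesssim\sum_k W_k\,(\mathcal M a)_k^p \quad\text{and}\quad \sum_k W_k\,a_k^p\lesssim[w]_{A_{p,a}}\,\|f\|_{L^p(w)}^p,$$
where the second bound comes from $W_k\sigma_k^{p-1}\le[w]_{A_{p,a}}a^p$. The problem is now reduced to the boundedness of $\mathcal M$ on $\ell^p(W)$.

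It therefore remains to check that $W=\{W_k\}$ belongs to $A_p^d$. For a discrete interval $J$ with $\#J$ terms, the union $\tilde J$ of the corresponding $I_k$ has length $a\,\#J\ge a$. Using Jensen's inequality for the convex function $t\mapsto t^{-1/(p-1)}$ on each $I_k$, we get
$$W_k^{-1/(p-1)}\le a^{-p/(p-1)}\sigma_k.$$
Combining this with the $A_{p,a}$ condition on $\tilde J$ gives $[W]_{A_p^d}\lesssim[w]_{A_{p,a}}$. The discrete Muckenhoupt theorem, obtained from the continuous one through the piecewise-constant extension $\tilde W$ noted in Section~\ref{sec:prel}, then shows that $\mathcal M$ is bounded on $\ell^p(W)$.

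The main obstacle is the pointwise domination of $M_af$ by the discrete maximal function, in particular handling intervals of length between $a$ and $2a$ that straddle the partition. The factor $3$ takes care of this, because any such interval meets at most three consecutive $I_k$.
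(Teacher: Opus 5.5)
Your proof is correct, but it takes a different route from the paper. The paper gives no argument of its own and only asserts that the classical proof in \cite[Section 7.1]{grafakos} carries over to intervals of length at least $a$.

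Your necessity part is that classical testing argument. The first test function you suggest, $(\sigma+\varepsilon)^{-1}\sigma\chi_I$, does not work, since it is bounded by $\chi_I$. But $\min(\sigma,n)\chi_I$ does, because $\min(\sigma,n)^p w\le\min(\sigma,n)$.

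For sufficiency you do not rerun the classical proof; you discretize instead. The chain is:
the pointwise bound $M_af(x)\le 3\,(\mathcal M\{a^{-1}\int_{I_k}|f|\}_k)_{k(x)}$;
your H\"older step, which is exactly the boundedness of $V$ proved in Section~\ref{sec:apa};
the $A_p^d$ property of $W$, which is Proposition~\ref{discrap};
and the classical theorem used as a black box through $\tilde W\in A_p$.
This is the same machinery the paper uses for $H_{2\pi}$ in the proof of Theorem~\ref{thm:main}, with the discrete maximal operator in place of $\H$.

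What your route buys is that small scales disappear entirely: the only local information about $w$ you use is $\sigma\in L^1_{\text{loc}}$. This matters, because the standard sufficiency route via reverse H\"older/openness and Marcinkiewicz interpolation breaks down for $A_{p,1}$. Take $w(x)=(x\log^2(1/x))^{p-1}$ on $(0,1/e)$ and $w=1$ elsewhere. Then $w\in A_{p,1}$, but $w^{-1/(q-1)}$ is not integrable near $0$ for any $q<p$. So $w\notin A_{q,1}$, and by the same testing argument $M_a$ is not even of weak type $(q,q)$ on $L^q(w)$.

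An adaptation of the classical proof must therefore avoid openness, for instance by a weighted dyadic maximal function argument restricted to intervals of length at least $a$. In return, it would naturally yield a Buckley-type bound $\|M_a\|\lesssim[w]_{A_{p,a}}^{1/(p-1)}$. Your H\"older step costs an additional factor $[w]_{A_{p,a}}^{1/p}$ on top of that.
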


More properties can be derived in a similar manner for the truncated case. We will also prove another characterization in relation to the operators $B_s$ introduced in (\ref{movingavg}).

\begin{prop} \label{unifbdd}
    Let $1 < p < \infty$, and let $w$ be a weight. Then, $w \in A_{p,1}$ if and only if the family of operators $\{B_s\}_{s \geq 1}$ is uniformly bounded on $L^p(w)$.
\end{prop}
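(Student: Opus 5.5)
We prove the two implications separately. Throughout write $\sigma = w^{-\frac{1}{p-1}}$.

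\emph{Sufficiency.} Assume $w\in A_{p,1}$. Since $[w]_{A_{p,2}}\le[w]_{A_{p,1}}$, every interval of length at least $2$ satisfies the $A_p$ inequality with constant $[w]_{A_{p,1}}$. Fix $s\ge 1$ and $f\ge 0$. Partition $\R$ into intervals $I_k=[ks,(k+1)s)$, $k\in\Z$. For $x\in I_k$, the averaging interval $(x-s,x+s)$ lies inside $J_k = I_{k-1}\cup I_k\cup I_{k+1}$, which has length $3s\ge 3$. Hence
$$ B_sf(x)\le \frac{3}{2}\,\frac{1}{|J_k|}\int_{J_k} f, \qquad x\in I_k. $$
By H\"older's inequality together with the truncated $A_p$ condition (property (6) of the previous proposition, applied with $a=1$),
$$ \Big(\frac{1}{|J_k|}\int_{J_k} f\Big)^p \le [w]_{A_{p,1}}\,\frac{1}{w(J_k)}\int_{J_k} f^p w . $$
Multiplying by $w(I_k)\le w(J_k)$ and summing over $k$ gives
$$ \int_\R (B_sf)^p w \le \Big(\tfrac32\Big)^p [w]_{A_{p,1}}\sum_k \int_{J_k} f^p w \le 3\Big(\tfrac32\Big)^p [w]_{A_{p,1}}\|f\|_{L^p(w)}^p , $$
because each point of $\R$ lies in at most three of the $J_k$. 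This bound is uniform in $s\ge1$. For general $f$ we use $|B_sf|\le B_s|f|$.

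\emph{Necessity.} Assume $\|B_s\|_{L^p(w)\to L^p(w)}\le C$ for every $s\ge 1$. Let $I$ be an interval with $|I|=\ell\ge 1$ and take $f\ge0$ supported in $I$. For $x\in I$ we have $I\subseteq (x-\ell,x+\ell)$, and therefore
$$ B_\ell f(x)\ge \frac{1}{2\ell}\int_I f . $$
Consequently
$$ \Big(\frac{1}{2\ell}\int_I f\Big)^p w(I)\le \int_I (B_\ell f)^p w\le C^p\int_I f^p w . $$
The main obstacle is that $\sigma$ need not be locally integrable a priori, so the standard test function $f=\sigma\chi_I$ cannot be used directly. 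Instead we take $f=(w+\varepsilon)^{-\frac{1}{p-1}}\chi_I$, or $\sigma\chi_{I\cap\{\sigma\le N\}}$. This yields
$$ \Big(\int_{I\cap\{\sigma\le N\}}\sigma\Big)^{p} w(I)\le (2\ell)^pC^p\int_{I\cap\{\sigma\le N\}}\sigma . $$
Dividing by the last integral, which is finite, and then letting $N\to\infty$ by monotone convergence, we obtain
$$ w(I)\,\sigma(I)^{p-1}\le 2^pC^p\ell^p . $$
(If $w(I)=0$ the inequality is trivial. Otherwise the bound also shows that $\sigma(I)<\infty$.) Hence $[w]_{A_{p,1}}\le 2^pC^p$. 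One could equivalently appeal to property (6) of the previous proposition instead of the explicit test functions.
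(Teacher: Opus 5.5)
Your proof is correct, and it takes a genuinely different route from the paper. The paper does not argue directly. It applies the Heinig--Sinnamon characterization (Theorem~\ref{heinsin}) of weighted bounds for $f\mapsto\int_{a(x)}^{b(x)}f$ with $a(x)=x-s$, $b(x)=x+s$. It then bounds the resulting constant $K$ from below by $\langle w\rangle_s^{1/p}$ (taking $x=t+s$) and from above by $\langle w\rangle_{3s}^{1/p}$ (enlarging to an interval of length $3s$), where $\langle w\rangle_s=\sup_{|I|=s}w(I)\sigma(I)^{p-1}$. This gives $(2s)^{-p}\langle w\rangle_s\le\|B_s\|^p\lesssim(6s)^{-p}\langle w\rangle_{3s}$ for each fixed $s$, and the proposition follows by taking suprema over $s\ge 1$.

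You replace that external theorem with an elementary argument: a partition at scale $s$ with tripled intervals, H\"older, and bounded overlap for sufficiency, and a testing argument with truncated $\sigma$ for necessity. This gives a self-contained proof with explicit constants, and you avoid transferring a theorem stated on $(0,\infty)$ to $\R$. You also lose nothing in precision. Your sufficiency step only uses intervals of length exactly $3s$, so it gives $\|B_s\|^p\le 3(2s)^{-p}\langle w\rangle_{3s}$. Your necessity step is exactly $(2s)^{-p}\langle w\rangle_s\le\|B_s\|^p$. So you recover the paper's scale-by-scale two-sided estimate, and with it the later remark that $w\in A_p$ if and only if $\{B_s\}_{s>0}$ is uniformly bounded.

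The paper's route is shorter on the page and places $B_s$ inside a more general framework (two weights, exponents $p\le q$, variable limits). It also absorbs the possible non-integrability of $\sigma$ into the form of $K$. You handle that by hand, via truncation and monotone convergence. This uses the standing assumption $w>0$ a.e.; your alternative test function $(w+\varepsilon)^{-1/(p-1)}\chi_I$ avoids it.
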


To prove this, we will make use of the following theorem by Heinig and Sinnamon:

\begin{thm}[{\cite[Theorem 2.2]{heinsin}}]\label{heinsin}
    Let $u$ and $v$ be weights on $\R$, and $1 < p \leq q < \infty$. Let $a,b: \R \to \R$ be increasing, differentiable functions such that $a(-\infty) = b(-\infty) = -\infty$, $a(\infty) = b(\infty) = \infty $, and $a(x) < b(x)$ for all $x \in \R$. For $f:\R \to \C$, define
    $$ Tf(x) = \int_{a(x)}^{b(x)} f(y) \, dy. $$

    Then, $T: L^p(u) \to L^q(v)$ if and only if
    $$ K = \sup \bigg\{ \bigg(\int_{a(x)}^{b(t)} u(y)^{-\frac{1}{p-1}} \, dy \bigg)^{\frac{p-1}{p}} \bigg(\int_t^x v(y) \, dy\bigg)^{\frac{1}{q}} \, : \, t \leq x, \; a(x) \leq b(t) \bigg\} < \infty. $$
    Moreover, $ K \leq \|T\| \leq 2 p^{1/q} (p')^{1/p'} K $.
\end{thm}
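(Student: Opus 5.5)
For necessity I would test $T$ on indicator-type functions. For sufficiency I would partition $\R$ into blocks on which $T$ splits into one Hardy operator and one dual Hardy operator, apply the one-weight-pair Hardy inequality (Bradley–Muckenhoupt form, valid for $p\le q$) on each block, and reassemble using $q\ge p$. Throughout I write $\sigma=u^{-\frac{1}{p-1}}$ and $\sigma(E)=\int_E\sigma$.

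\emph{Necessity ($K\le\|T\|$).} Fix $t\le x$ with $a(x)\le b(t)$ and set $J=[a(x),b(t)]$. Take $f=\sigma\chi_{J'}$, where $J'\subseteq J$ is chosen so that $\sigma(J')<\infty$; this truncation lets me pass to $\sigma(J)$ by monotone convergence. For every $y\in[t,x]$, monotonicity of $a$ and $b$ gives $a(y)\le a(x)$ and $b(y)\ge b(t)$, so $[a(y),b(y)]\supseteq J$ and therefore $Tf(y)\ge\sigma(J')$. Hence
\[
\|Tf\|_{L^q(v)}\ge \sigma(J')\,v([t,x])^{1/q},\qquad \|f\|_{L^p(u)}=\sigma(J')^{1/p},
\]
which yields $\sigma(J')^{1/p'}v([t,x])^{1/q}\le\|T\|$. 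Letting $J'\nearrow J$ and taking the supremum over admissible $(t,x)$ gives $K\le\|T\|$.

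\emph{Sufficiency.} I may assume $f\ge0$.

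\textbf{Step 1: the partition.} Choose $x_0$ and define recursively points with $a(x_{k+1})=b(x_k)$, going forward and backward; this is possible by continuity, since $a,b$ are onto $\R$. Because $a(x_k)<b(x_k)=a(x_{k+1})$, the sequence $(x_k)$ is strictly increasing. It must tend to $\pm\infty$: otherwise a finite limit $x^*$ would satisfy $a(x^*)=b(x^*)$, contradicting $a<b$. So the intervals $[x_k,x_{k+1})$ partition $\R$.

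\textbf{Step 2: splitting $T$ on a block.} Let $x\in[x_k,x_{k+1})$ and put $c_k=b(x_k)=a(x_{k+1})$. Then $a(x)\le c_k\le b(x)$, so
\[
Tf(x)=\int_{a(x)}^{c_k}f+\int_{c_k}^{b(x)}f=:T_k^1f(x)+T_k^2f(x),
\]
where $T_k^1f$ only involves $f$ on $I_k=[a(x_k),c_k]=[a(x_k),b(x_k)]$ and $T_k^2f$ only involves $f$ on $I'_k=[c_k,b(x_{k+1})]$.

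\textbf{Step 3: the dual Hardy piece $T_k^1$.} After the change of variable $s=a(x)$, $T_k^1$ is a dual Hardy operator. Its weighted Hardy constant is
\[
\sup_{x\in[x_k,x_{k+1})}\sigma([a(x),c_k])^{1/p'}\,v([x_k,x])^{1/q}.
\]
This is bounded by $K$: take $t=x_k$, so that $b(t)=c_k\ge a(x)$ and the pair $(t,x)$ is admissible.

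\textbf{Step 4: the Hardy piece $T_k^2$.} $T_k^2$ is a Hardy operator with constant
\[
\sup_{x}\sigma([c_k,b(x)])^{1/p'}\,v([x,x_{k+1}])^{1/q}.
\]
This is also bounded by $K$: use the pair $(x,x_{k+1})$, which is admissible since $a(x_{k+1})=c_k\le b(x)$.

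\textbf{Step 5: the per-block estimate.} The sharp Hardy inequality for $p\le q$ then gives
\[
\|T_k^if\|_{L^q(v,[x_k,x_{k+1}))}\le p^{1/q}(p')^{1/p'}K\,\|f\|_{L^p(u,\,\cdot\,)},
\]
where the norm on the right is taken over $I_k$ for $i=1$ and over $I'_k$ for $i=2$.

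\textbf{Step 6: summing over blocks.} By Minkowski in $\ell^q$, $\|Tf\|_{L^q(v)}$ is at most the sum of the $\ell^q$ norms over $k$ of the two families of block estimates. Within each family the intervals $\{I_k\}_k$, respectively $\{I'_k\}_k$, are essentially disjoint; indeed $I'_k=I_{k+1}$. Since $q\ge p$, we have $\big(\sum_k\|f\chi_{I_k}\|^q_{L^p(u)}\big)^{1/q}\le\big(\sum_k\|f\chi_{I_k}\|^p_{L^p(u)}\big)^{1/p}\le\|f\|_{L^p(u)}$, and likewise for the $I'_k$. Adding the two families produces the factor $2$, giving $\|T\|\le 2p^{1/q}(p')^{1/p'}K$.

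The main obstacle is Steps 3 and 4: one must check that the Hardy constants on each block are genuinely dominated by $K$. The choice $a(x_{k+1})=b(x_k)$ is what makes the required pairs $(t,x)$ admissible. A secondary point is justifying the change of variable when $a$ is only nondecreasing; I would handle that by working directly with the measure $v$ pulled back along $a$, or by a standard approximation.
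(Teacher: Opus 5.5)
Your proof is correct. There is no proof in the paper to compare it with: the paper quotes the result from Heinig--Sinnamon, where it is stated on $(0,\infty)$, and only remarks that an exponential change of variables moves it to $\R$. That transfer is legitimate. With $\tilde a=\exp\circ a\circ\log$, $\tilde b=\exp\circ b\circ\log$, $\tilde u(\eta)=\eta^{p-1}u(\log\eta)$, $\tilde v(\xi)=v(\log\xi)/\xi$ and $g(\eta)=f(\log\eta)/\eta$, both the operator norm and $K$ are unchanged.

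You instead argue directly on $\R$:
\begin{itemize}
\item the chain $a(x_{k+1})=b(x_k)$ splits $T$ on each block $[x_k,x_{k+1})$ into a dual Hardy operator and a Hardy operator, acting on the consecutive intervals $[b(x_{k-1}),b(x_k)]$ and $[b(x_k),b(x_{k+1})]$;
\item the admissible pairs $(x_k,x)$ and $(x,x_{k+1})$ bound both Hardy constants by $K$;
\item $q\ge p$ lets you resum over blocks.
\end{itemize}
The factor $2$ times $p^{1/q}(p')^{1/p'}$ is exactly the constant in the statement, which suggests this is also the route of the original source. What your version gains is that it is self-contained and slightly more general: it uses only continuity, surjectivity and weak monotonicity of $a$ and $b$, never differentiability, and it needs no transfer from $(0,\infty)$.

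Two small corrections.
\begin{itemize}
\item For $q>p$, $p^{1/q}(p')^{1/p'}$ is not the sharp Hardy constant. It is what the H\"older--Minkowski proof of Hardy's inequality gives for one particular choice of the auxiliary exponent; optimizing that choice gives the smaller $(1+q/p')^{1/q}(1+p'/q)^{1/p'}$. So cite it as a valid upper bound, not as ``sharp''.
\item The change of variable $s=a(x)$ can be dropped. Run the H\"older--Minkowski argument in the variable $x$, using that $x\mapsto\sigma([a(x),c_k])$ is nonincreasing and $x\mapsto v([x_k,x])$ is continuous. If you do change variables, the relevant measure is the push-forward $a_\#(v\,dx)$, not a pull-back. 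At a value that $a$ takes on a whole interval, the Hardy condition is evaluated at the right endpoint of that interval, which is still one of your admissible $x$.
\end{itemize}
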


\begin{rem}
    The theorem was originally stated for spaces over $(0,\infty)$. Doing an exponential change of variables, we obtain exactly the same statement for spaces over $\R$.
\end{rem}

\begin{proof}[Proof (Proposition \ref{unifbdd}).]
    We will denote
    $$ \langle w \rangle_s = \sup_{|I| = s} \bigg\{ \bigg(\int_I w(x) \, dx \bigg) \bigg(\int_I w(x)^{-\frac{1}{p-1}} \, dx \bigg)^{p-1} \bigg\}. $$
    
    Using Theorem \ref{heinsin} for $a(x) = x-s$, $b(x) = x+s$ and $u = v = w$, we obtain that $B_s$ is bounded if and only if
    $$K = \sup \bigg\{ \bigg(\int_{x-s}^{t+s} w(y)^{-\frac{1}{p-1}} \, dy \bigg)^{\frac{p-1}{p}} \bigg(\int_t^x w(y) \, dy\bigg)^{\frac{1}{p}} \, : \, t \leq x \leq t + 2s \bigg\} < \infty.$$

    We can bound this expression as
    \begin{align*}
        K & \leq \sup_t \bigg\{ \bigg(\int_{t-s}^{t+s} w(y)^{-\frac{1}{p-1}} \, dy \bigg)^{\frac{p-1}{p}} \bigg(\int_t^{t+2s} w(y) \, dy\bigg)^{\frac{1}{p}} \bigg\} \\
        & \leq \sup_t \bigg\{ \bigg(\int_{t-s}^{t+2s} w(y)^{-\frac{1}{p-1}} \, dy \bigg)^{\frac{p-1}{p}} \bigg(\int_{t-s}^{t+2s} w(y) \, dy\bigg)^{\frac{1}{p}} \bigg\} = \langle w \rangle_{3s}^{\frac{1}{p}}.
    \end{align*}
    To give a lower bound, considering the case $x = t+s$, we have that
    \begin{align*}
        K & \geq \sup_t \bigg\{ \bigg(\int_{t}^{t+s} w(y)^{-\frac{1}{p-1}} \, dy \bigg)^{\frac{p-1}{p}} \bigg(\int_t^{t+s} w(y) \, dy\bigg)^{\frac{1}{p}} \bigg\} = \langle w \rangle_s^{\frac{1}{p}}.
    \end{align*}
    Since we know that $\|B_s\| \approx K/2s$, we have found
    that
    $$ (2s)^{-p} \langle w \rangle_s \leq \|B_s\|^p \lesssim (6s)^{-p} \langle w \rangle_{3s}. $$

    With these inequalities, it is straightforward that
    $$ [w]_{A_{p,1}} = \sup_{s > 1} \, s^{-p} \langle w \rangle_s \leq (\sup_{s>1} \|B_s\|)^p \lesssim \sup_{s > 1} \, (3s)^{-p} \langle w \rangle_{3s} = [w]_{A_{p,3}} \approx [w]_{A_{p,1}}, $$
    and the result follows.
\end{proof}

\begin{rem}
    Note that we also have
    $$ [w]_{A_{p}} = \sup_{s > 0} \, s^{-p} \langle w \rangle_s \leq (\sup_{s>0} \|B_s\|)^p \lesssim \sup_{s > 0} \, (3s)^{-p} \langle w \rangle_{3s} = [w]_{A_{p}}, $$
    so it is true that $w \in A_p$ if and only if the operators $\{B_s\}_{s>0}$ are uniformly bounded.
\end{rem}

\begin{rem}
    A similar result was proved in \cite[Theorem 2.1]{berezhnoi} and \cite[Proposition 3.3]{nieraeth}. For a family of pairwise disjoint intervals $\textbf{I} = \{I_j\}_j$, define the following averaging operator
    $$ A_\textbf{I}f(x) = \sum_j \bigg( \frac{1}{|I_j|} \int_{I_j} f(y) \, dy \bigg) \chi_{I_j}(x). $$
    Then, in the particular case of $L^p(w)$ over $\R$, the theorem states that $w \in A_p$ if and only if the operators $A_\textbf{I}$ are bounded uniformly over $\textbf{I}$.
\end{rem}

A natural question to ask is whether the truncated classes coincide with the classical ones. The following proposition shows that the inclusions are strict.

\begin{prop} \label{ex:ap1counter}
    For any $1 < p < \infty$, $A_p \subsetneq A_{p,1}$.
\end{prop}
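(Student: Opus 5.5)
The inclusion $A_p \subseteq A_{p,1}$ is immediate, since $[w]_{A_{p,1}}$ is a supremum over a subfamily of the intervals used for $[w]_{A_p}$; hence $[w]_{A_{p,1}} \leq [w]_{A_p}$. The content of the statement is strictness. The introduction announces a single weight that lies in $A_{p,1}$ for every $1<p<\infty$ but in no $A_p$. So I would build one weight that exhibits local non-integrability of $w^{-1/(p-1)}$, or a local singularity, which large intervals cannot see.

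My candidate is a periodic weight with a strong local singularity. Let $w(x)=|x-k|^{\alpha}$ near each integer $k$; more simply, take $w(x)=\phi(x-\lfloor x\rfloor)$ where $\phi$ is a fixed positive function on $[0,1)$. I would rather pick something like
$$w(x)=\exp\bigl(-1/\operatorname{dist}(x,\Z)\bigr) \quad\text{or}\quad w(x)=\operatorname{dist}(x,\Z)^{\alpha}$$
with both $w$ and $w^{-1/(p-1)}$ locally integrable. The cleanest choice that handles all $p$ simultaneously is a $1$-periodic weight $w$ with $w,w^{-1}\in L^\infty_{\mathrm{loc}}$ on each period except for an oscillation that breaks the small-scale condition. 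An even simpler candidate is $w=\sum_k \chi_{[k,k+1/2)}+\varepsilon_k\chi_{[k+1/2,k+1)}$. However, for a weight bounded above and below, the averages are comparable on all intervals, so boundedness above and below must be avoided. I will therefore take $w(x)=e^{\,g(x)}$ with $g$ $1$-periodic and locally unbounded. A concrete choice is $g(x)=\sum_{k\in\Z}\log|x-k|^{-1}\cdot\chi_{(k-1/2,k+1/2)}(x)$ adjusted, but power weights $|x|^\alpha$ are in $A_p$ precisely for $-1<\alpha<p-1$. So I would instead use the periodization of $w_0(x)=\exp(1/|x|^{1/2})$... which is not integrable.

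The better route is periodic weights built from pieces that are individually integrable but whose $A_p$ constant over small intervals blows up. Concretely, on the $k$-th period $[k,k+1)$ I place $|x-k-\tfrac12|^{\alpha_k}$ with a fixed $\alpha$, which fails since that piece is $A_p$-fine for moderate $\alpha$. So I would choose, on $[k,k+1)$, the weight $w(x)=\delta_k^{-1}$ on a subinterval of length $\delta_k$ and $1$ elsewhere, with $\delta_k\to0$ as $|k|\to\infty$. On each unit cell, $\int w\approx 2$ and $\int w^{-1/(p-1)}\approx 1$. Any interval with $|I|\geq1$ is covered by at most $|I|+2\leq 3|I|$ cells, so both integrals are $\lesssim |I|$ and $[w]_{A_{p,1}}\lesssim 3^p$ for every $p$. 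For the failure of $A_p$, test the interval consisting of the small subinterval together with an adjacent piece of length $\delta_k$. There $\frac{1}{|I|}\int_I w\approx \delta_k^{-1}$ and $\frac{1}{|I|}\int_I w^{-1/(p-1)}\approx 1$, so the $A_p$ product is $\gtrsim\delta_k^{-1}\to\infty$. Hence $w\notin A_p$ for every $p$.

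The main step to check is the uniform upper bound over large intervals. The only possible obstacle is an interval of length between $1$ and $2$ capturing two heavy spikes, which the cell-counting argument handles, since each spike contributes mass exactly $1$.
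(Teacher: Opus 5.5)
Your final construction is correct, and it takes a different route from the paper's. You take $w\ge 1$ with, in each cell $[k,k+1)$, a spike of height $\delta_k^{-1}$ on a set of length $\delta_k$ (so each spike has mass exactly $1$), where $\delta_k\to 0$ as $|k|\to\infty$. Membership in $A_{p,1}$ then follows from counting the at most $|I|+2\le 3|I|$ cells that meet $I$, together with the trivial bound $w^{-1/(p-1)}\le 1$; this actually gives $[w]_{A_{p,1}}\le 6$ for every $p$. The failure of $A_p$ follows by testing on an interval of length $2\delta_k$ lying half on a spike, which gives a product at least $2^{-p}\delta_k^{-1}$.

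The paper instead keeps $w\equiv 1$ outside a small neighbourhood of the origin. It lets $w$ take the values $k$ and $1/k$ on the two halves of the dyadic shells $(2^{-k-1},2^{-k})$, so the $A_p$ product on the $k$-th shell is $\approx k^2$. Because the whole singular set has measure $<\delta$ and small $w$- and $w^{-1/(p-1)}$-mass, every interval of length $>2\delta$ sees averages $\lesssim 1$.

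What each route buys:
\begin{itemize}
\item Your example is more elementary: there is no smallness parameter to tune, the second factor is bounded for free, and the weight is visibly independent of $p$. It shows that the failure of $A_p$ can be pushed out to infinity while $w$ stays bounded above and below on every compact set.
\item The paper's example shows that $A_{p,1}$ cannot detect a compactly supported perturbation of a constant weight that is not even locally $A_p$ near a single point. This is the ``large-scale'' feature stressed in the remark that follows the proposition.
\end{itemize}

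For the write-up, delete the abandoned attempts (the periodizations and the power-weight discussion). Also fix the spike explicitly as $[k,k+\delta_k)$ with $\delta_k\le 1/2$, so that the test interval $[k,k+2\delta_k)$ lies inside one cell and $w=1$ on its right half.
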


\begin{proof} 
    Let $\delta \in (0, \frac{1}{2})$, and take $M\in \N$ such that 
    $$ \sum_{k= M+1}^\infty k 2^{-k} < \delta \qquad \text{and} \qquad \sum_{k= M+1}^\infty k^{\frac{1}{p-1}} 2^{-k} < \delta. $$
    For $k\geq M+1$, denote the intervals $ I_k^0 = (2^{-k-1}, 2^{-k - \frac{1}{2}})$ and $ I_k^1 = [2^{-k-\frac{1}{2}}, 2^{-k})$, and their union $ I_k = (2^{-k-1}, 2^{-k})$. With these, define the weight $w$ at any point $x\in \R$ as
    $$ w(x) = \begin{cases}
        k & \quad x\in I_k^0, \\
        \frac{1}{k} & \quad x\in I_k^1, \\
        1 & \quad x\not \in \bigcup_{k\geq M+1} I_k.
    \end{cases} $$
    Let us check that this weight satisfies $w\in A_{p,1}\setminus A_p$, for each $1 < p < \infty$.

    To see that it is not an $A_p$ weight, we will consider the averages over the intervals $I_k$: since $|I_k^0| \approx |I_k| \approx |I_k^1|$, a quick computation gives
    $$ \frac{1}{|I_k|} \int_{I_k} w(x) \, dx \approx k \qquad \text{and} \qquad \frac{1}{|I_k|} \int_{I_k} w(x)^{-\frac{1}{p-1}} \, dx \approx k^{\frac{1}{p-1}}.
    $$
    Hence, we have that
    $$ \bigg( \frac{1}{|I_k|} \int_{I_k} w(x) \, dx \bigg) \bigg( \frac{1}{|I_k|} \int_{I_k} w(x)^{-\frac{1}{p-1}} \,dx \bigg)^{p-1} \approx k^2 \underset{k\to \infty}{\longrightarrow} \infty, $$
    so the weight $w$ does not belong to $A_p$. Note that $|I_k| \to 0$ when $k \to \infty$.

    Denote $I = \bigcup_{k\geq M+1} I_k$, and observe that $|I| < \delta$. We also have
    $$ w(I) := \int_I w(x) \, dx = \sum_{k= M+1}^\infty \int_{I_k} w(x) \,dx \approx \sum_{k= M+1}^\infty k 2^{-k} < \delta, $$
    $$ w^{-\frac{1}{p-1}}(I) := \int_I w(x)^{-\frac{1}{p-1}} \, dx = \sum_{k= M+1}^\infty \int_{I_k} w(x)^{-\frac{1}{p-1}} \,dx \approx \sum_{k= M+1}^\infty k^{\frac{1}{p-1}} 2^{-k} < \delta. $$

    Take an interval $E \subseteq \R$ with $|E| > 2\delta$, which implies that $|E\setminus I| \approx |E| $. With this, we can bound the averages over $E$ as follows:
    \begin{align*}
        \frac{1}{|E|} \int_E w(x) \,dx &\leq \frac{1}{|E|} \int_{E\cup I} w(x) \,dx = \frac{1}{|E|} (w(I) + |E\setminus I|) \leq \frac{1}{|E|} (\delta + |E\setminus I|) \lesssim 1.
    \end{align*}
    Similarly, taking the average on $w^{-\frac{1}{p-1}}$, we obtain
    \begin{align*}
        \frac{1}{|E|} \int_E w(x)^{-\frac{1}{p-1}} \, dx &\leq \frac{1}{|E|} (w^{-\frac{1}{p-1}}(I) + |E\setminus I|) \leq \frac{1}{|E|} (\delta + |E\setminus I|) \lesssim 1.
    \end{align*}
    Therefore, we prove that $w \in A_{p,2\delta} = A_{p,1}$, since
    $$ \bigg( \frac{1}{|E|} \int_{E} w(x) \,dx \bigg) \bigg( \frac{1}{|E|} \int_{E} w(x)^{-\frac{1}{p-1}} \,dx \bigg)^{p-1} \lesssim 1. $$

\end{proof}

\begin{rem}
    The weight in the proof of Proposition \ref{ex:ap1counter} exhibits a bad, oscillatory behavior concentrated at the origin, which makes it fail the $A_p$ condition. However, this localized phenomenon does not impact as much the $A_{p,1}$ condition. This fact highlights the strictly large-scale nature of $A_{p,1}$ in contrast to the global one of $A_p$.
\end{rem}

The truncated $A_{p,1}$ condition also has natural relations to discrete $A_p^d$ weights by means of certain averaging operators. Next, we show some of these relations, as they will be necessary in Section \ref{sec:seg}.

\begin{prop} \label{discrap}
    Let $w$ be a weight on $\R$, $1 < p < \infty$, and suppose that $w\in A_{p,1}$. If we define
    $$w_k = \int_{ak}^{a(k+1)} w(x) \, dx $$
    for some $a>0$ and for each $k\in\Z$, then the discrete weight $w_d = \{w_k\}_k$ satisfies the $A_p^d$ condition.
\end{prop}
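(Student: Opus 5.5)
We need to show that $w_d=\{w_k\}_k$ with $w_k=w([ak,a(k+1)))$ belongs to $A_p^d$. Fix a discrete interval $J=\{m,\dots,l\}$ with $N=l-m+1$ elements, and let $I=[am,a(l+1))$, so that $|I|=aN$ and $\sum_{k\in J}w_k=w(I)$. The two averages in the discrete condition are then
$$\frac1N\sum_{k\in J}w_k=\frac{w(I)}{N}=a\,\frac{1}{|I|}\int_I w, \qquad \frac1N\sum_{k\in J}w_k^{-\frac{1}{p-1}}.$$
The first is already a continuous average over $I$. The whole task is to control the second average by the corresponding continuous average of $w^{-\frac{1}{p-1}}$ over $I$, or over a slightly larger interval.

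The key step is a pointwise bound on each $w_k^{-\frac{1}{p-1}}$. Let $Q_k=[ak,a(k+1))$. By the property of $A_{p,a}$ that coincides with the sup over test functions, applied with $f\equiv 1$ on $Q_k$ (equivalently, by Hölder's inequality in the form $|Q_k|^p\le w(Q_k)\,\big(\int_{Q_k}w^{-\frac{1}{p-1}}\big)^{p-1}$, which holds for any weight), we get
$$w_k^{-\frac{1}{p-1}}=w(Q_k)^{-\frac{1}{p-1}}\le a^{-p'}\int_{Q_k}w^{-\frac{1}{p-1}}.$$
Note that this inequality does not even use the truncated condition. Summing over $k\in J$ gives
$$\sum_{k\in J}w_k^{-\frac{1}{p-1}}\le a^{-p'}\int_I w^{-\frac{1}{p-1}}.$$

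Combining the two bounds,
$$\Big(\frac1N\sum_J w_k\Big)\Big(\frac1N\sum_J w_k^{-\frac{1}{p-1}}\Big)^{p-1}\le \frac{w(I)}{N}\Big(\frac{a^{-p'}}{N}\int_I w^{-\frac{1}{p-1}}\Big)^{p-1}.$$
Since $N=|I|/a$ and $p'(p-1)=p$, the powers of $a$ are $a\cdot a^{p-1}\cdot a^{-p}=1$. The right-hand side therefore equals
$$\Big(\frac1{|I|}\int_I w\Big)\Big(\frac1{|I|}\int_I w^{-\frac{1}{p-1}}\Big)^{p-1}.$$
Because $|I|=aN\ge a$, this is at most $[w]_{A_{p,a}}$, which is finite since $A_{p,a}=A_{p,1}$ by the proposition on thresholds. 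Taking the supremum over $J$ gives $[w_d]_{A_p^d}\le[w]_{A_{p,a}}$.

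The only delicate point is the direction of the Hölder step. We need an upper bound for $w_k^{-\frac{1}{p-1}}$, and Jensen's inequality (convexity of $t\mapsto t^{-\frac{1}{p-1}}$) gives exactly that: $(\text{avg } w)^{-\frac{1}{p-1}}\le \text{avg}\,(w^{-\frac{1}{p-1}})$. I expect no further obstacle; there is no need for doubling or for enlarging intervals, since the discrete intervals correspond exactly to continuous intervals of length at least $a$.
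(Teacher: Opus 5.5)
Your proof is correct and is essentially the paper's argument: you bound each block by Jensen/H\"older, $w_k^{-\frac{1}{p-1}} \le a^{-p'}\int_{ak}^{a(k+1)} w^{-\frac{1}{p-1}}$, sum over $J$, and compare with the continuous $A_p$ quantity on $[am,a(l+1)]$, whose length is at least $a$, giving $[w_d]_{A_p^d}\le [w]_{A_{p,a}}$. One small slip: plugging $f\equiv 1$ into the test-function characterization only yields $1\le [w]_{A_{p,a}}$, so the per-block inequality comes from H\"older/Jensen, as you also state.
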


\begin{proof}
    Let $k\in \Z$. Note that the function $t\mapsto t^\theta$ on $\R^+$ is convex for any $\theta < 0$, so we can apply Jensen's inequality with $\theta = -\frac{1}{p-1}$ to get that
    $$\left(\frac{1}{a}\int_{ak}^{a(k+1)} w(x)\, dx \right)^{-\frac{1}{p-1}} \leq \frac{1}{a} \int_{ak}^{a(k+1)} w(x)^{-\frac{1}{p-1}}\, dx. $$

    For any interval $J = [m,l]\subseteq\Z$, define the real interval $\Tilde{J} = [ma,(l+1)a]\subseteq\R$. Then, it is clear that we have
    \begin{equation} \label{1term}
        \frac{1}{\# J} \sum_{k\in J} w_k = \frac{1}{\# J} \sum_{k\in J} \int_{ak}^{a(k+1)} w(x) \, dx = \frac{a}{|\Tilde{J}|} \int_{\Tilde{J}} w(x) \, dx. 
    \end{equation}
    Looking at the other term in the supremum in (\ref{apd}), we get
    \begin{equation} \label{2term}
    \begin{split}
        \left(\frac{1}{\# J} \sum_{k\in J} w_k^{-\frac{1}{p-1}} \right)^{p-1} &= \left(\frac{1}{\# J} \sum_{k\in J} \left( \int_{ak}^{a(k+1)} w(x) \, dx \right)^{-\frac{1}{p-1}} \right)^{p-1} \\
        &\leq \left(\frac{a^{-p'}}{\# J} \sum_{k\in J}  \int_{ak}^{a(k+1)} w(x)^{-\frac{1}{p-1}} \, dx \right)^{p-1} \\
        &= \frac{1}{a}\left(\frac{1}{|\Tilde{J}|}  \int_{\Tilde{J}} w(x)^{-\frac{1}{p-1}} \, dx \right)^{p-1}.
    \end{split}
    \end{equation}
    This way, we can bound the product of the left-hand sides of (\ref{1term}) and (\ref{2term}) to get:
    \begin{align*}
        \left( \frac{1}{\# J} \sum_{k\in J} w_k \right)\left(\frac{1}{\# J} \sum_{k\in J} w_k^{-\frac{1}{p-1}} \right)^{p-1} \leq \left( \frac{1}{|\Tilde{J}|} \int_{\Tilde{J}} w(x) \, dx\right)\left(\frac{1}{|\Tilde{J}|}  \int_{\Tilde{J}} w(x)^{-\frac{1}{p-1}} \, dx \right)^{p-1},
    \end{align*}
    so taking the supremum over all intervals $J\subseteq\Z$, we see that $w_d$ has the discrete $A_p^d$ condition with constant at most $[w]_{A_{p,a}}$.
\end{proof}

\begin{prop} 
    Let $1 < p < \infty$ and $w\in A_{p,1}$. Given $a > 0$, let $w_d = \{w_k\}_k$ be the discrete weight where $w_k = \int_{ak}^{a(k+1)} w(x) \, dx $ for every $k\in\Z$. Define the operator $V = V_a: L^p(w) \to \ell^p(w_d)$, as
    $$Vf_k = \int_{ak}^{a(k+1)} f(x) \, dx. $$
    Then, $V$ is well-defined, bounded and surjective.
\end{prop}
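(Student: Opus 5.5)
Write $Q_k=[ak,a(k+1))$. There are three things to prove: that $Vf_k$ is finite, that $V$ is bounded, and that $V$ is onto.

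\emph{Well-definedness and boundedness.} Both follow from H\"older's inequality on each $Q_k$. Writing $f = f w^{1/p} w^{-1/p}$, we get
\[
|Vf_k|^p \le \Big(\int_{Q_k} |f|^p w\Big)\Big(\int_{Q_k} w^{-\frac{1}{p-1}}\Big)^{p-1}.
\]
Since $|Q_k| = a$, the truncated condition (with level $a$, and $A_{p,a}=A_{p,1}$) gives
\[
\Big(\int_{Q_k} w\Big)\Big(\int_{Q_k} w^{-\frac{1}{p-1}}\Big)^{p-1}\le [w]_{A_{p,a}}\,a^p .
\]
In particular $\int_{Q_k}w^{-1/(p-1)}<\infty$, so $f\in L^1(Q_k)$ and each $Vf_k$ is finite. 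Multiplying the first display by $w_k=\int_{Q_k}w$ and using the second, we obtain
\[
|Vf_k|^p\, w_k \le [w]_{A_{p,a}}\, a^p \int_{Q_k}|f|^p w .
\]
Summing over $k\in\Z$ gives $\|Vf\|_{\ell^p(w_d)}\le [w]_{A_{p,a}}^{1/p}\, a\, \|f\|_{L^p(w)}$, with no further obstacle. Proposition \ref{discrap} is not needed here.

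\emph{Surjectivity.} Given $b=(b_k)\in \ell^p(w_d)$, the natural candidate is the step function $f=\sum_k \frac{b_k}{a}\chi_{Q_k}$, which satisfies $Vf_k=b_k$ for every $k$. Its norm is
\[
\|f\|_{L^p(w)}^p=\sum_k a^{-p}|b_k|^p w_k ,
\]
so $f\in L^p(w)$ with $\|f\|\le a^{-1}\|b\|$. This step is straightforward.

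The only delicate point is measure-theoretic. Since $w$ is a.e.\ positive and locally integrable, each $w_k$ lies in $(0,\infty)$, so $\ell^p(w_d)$ is a genuine weighted space and the constant $\int_{Q_k}w^{-1/(p-1)}$ is finite, which is what H\"older required. Local integrability of $w^{-1/(p-1)}$ on intervals of length $a$ is exactly what the truncated condition supplies.
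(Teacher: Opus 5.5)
Your proof is correct and follows essentially the same route as the paper. Both apply H\"older's inequality on each interval $[ak,a(k+1))$, use the $A_{p,a}$ bound $\big(\int_{Q_k} w\big)\big(\int_{Q_k} w^{-\frac{1}{p-1}}\big)^{p-1}\le [w]_{A_{p,a}}\,a^p$, and sum over $k$ to obtain $\|V\|\le a\,[w]_{A_{p,a}}^{1/p}$. The paper only says that surjectivity is immediate; your step function $\sum_k \frac{b_k}{a}\chi_{Q_k}$ is the explicit preimage behind that claim.
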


\begin{proof}
    Using H\"older's inequality, we obtain
    \begingroup
    \allowdisplaybreaks
    \begin{align*}
        \sum_{k\in\Z} |Vf_k|^p w_k &= \sum_{k\in\Z} \bigg|\int_{ak}^{a(k+1)} f(x)w_k^{\frac{1}{p}} \, dx\bigg|^p = \sum_{k\in\Z} \bigg|\int_{ak}^{a(k+1)} f(x)w(x)^{\frac{1}{p}}\left(\frac{w_k}{w(x)}\right)^{\frac{1}{p}} \, dx\bigg|^p \\
        &\leq \sum_{k\in\Z} \bigg(\int_{ak}^{a(k+1)} |f(x)|^p w(x) \, dx\bigg) \bigg(\int_{ak}^{a(k+1)} \left(\frac{w_k}{w(x)}\right)^{\frac{p'}{p}} \, dx\bigg)^\frac{p}{p'} \\
        &= \sum_{k\in\Z} \bigg(\int_{ak}^{a(k+1)} |f(x)|^p w(x) \, dx\bigg) \bigg( \int_{ak}^{a(k+1)} w(x) \, dx \bigg) \bigg(\int_{ak}^{a(k+1)} w(x)^{-\frac{1}{p-1}} \, dx\bigg)^{p-1} \\
        &\leq a^p [w]_{A_{p,a}} \sum_{k\in\Z} \bigg(\int_{ak}^{a(k+1)} |f(x)|^p w(x) \, dx\bigg) = a^p [w]_{A_{p,a}} \int_\R |f(x)|^p w(x) \, dx.
    \end{align*}
    \endgroup
    Thus, we see that $Vf \in \ell^p(w_d)$ and that $\|V\| \leq a [w]_{A_{p,a}}^{\frac{1}{p}}$. The surjectivity of $V$ is immediate.
\end{proof}

It is natural to ask whether the reverse of Proposition \ref{discrap} is true. Namely, if having the $A_p^d$ condition on $w_d$ implies that $w \in A_{p,1}$. The following example disproves this:

\begin{ex}
    Let $1 < p < \infty$, and let
    $$ w(x) = \begin{cases}
        x^{p-1}, & 0 < x < 1,\\
        1, & x \not \in (0,1).
    \end{cases} $$
    Note that, for $x \in (0,1)$, $w(x)^{-\frac{1}{p-1}} = \frac{1}{x}$, so $w^{-\frac{1}{p-1}} \not \in L^1_{\text{loc}}$. Therefore, $w \not \in A_{p,1}$.

    On the other hand, a direct computation gives $w_k \approx 1$, so $w_d$ is an $A_p^d$ weight.
\end{ex}

\section{The segment multiplier on weighted Lebesgue spaces} \label{sec:seg}

Our main goal is to characterize the boundedness of the segment multiplier. For this purpose, we recall its definition as well as the truncated Hilbert transform's:

\begin{defo}
    Given $\eps > 0$, define the following kernel functions:
    $$ \mathrm{sinc}(t) = \frac{\sin t}{t}, \qquad h_\eps(t) = \frac{1}{t}\chi_{\R\setminus(-\eps,\eps)}. $$

    Their convolution operators are, respectively, the segment multiplier and the truncated Hilbert transform:
    $$ Sf = \mathrm{sinc} *f, \qquad H_\eps f = h_\eps * f. $$ 
\end{defo}

\begin{rem}
    Let $w$ be a weight and $1 < p < \infty$. Suppose that $B_s$ is bounded for any $s>0$. Then, given $\eps,\delta>0$, Corollary \ref{bbs} implies that $H_\eps - H_\delta$ is bounded on $L^p(w)$. Thus, the boundedness of $H_\eps$ and $H_\delta$ are equivalent.
\end{rem}

We are now able to prove our main result:

\begin{thm} \label{thm:main}
    Let $1 < p < \infty$ and let $w$ be a weight. Then, $S:L^p(w) \to L^p(w)$ is bounded if and only if $w\in A_{p,1}$.
\end{thm}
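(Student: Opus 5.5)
We prove the two implications separately. Throughout, we use the identity $\sin(x-y) = \sin x\cos y - \cos x \sin y$, which gives
$$ Sf(x) = \sin x \, \mathrm{p.v.}\!\int \frac{\cos y\, f(y)}{x-y}\,dy - \cos x \,\mathrm{p.v.}\!\int \frac{\sin y\, f(y)}{x-y}\,dy, $$
so that $S$ is a combination of modulated Hilbert transforms. Since the kernel $\mathrm{sinc}$ is smooth and bounded near the origin, we may equally write it with $H_1$ in place of $H$ (the difference $\mathrm{sinc} - \big(\sin x\, h_1 \cos - \cos x\, h_1 \sin\big)$ is a bounded compactly supported kernel, controlled by $B_1$).

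\emph{Sufficiency.} Assume $w\in A_{p,1}$. By Proposition~\ref{unifbdd}, the operators $\{B_s\}_{s\ge1}$ are uniformly bounded on $L^p(w)$, so Lemma~\ref{poisson} gives that $P$ is bounded, and Corollary~\ref{bbs} gives that convolution with any bounded compactly supported kernel is bounded. It then suffices to show that $H_1$ (or $H_\pi$, say) is bounded on $L^p(w)$, because multiplying by the bounded functions $\sin x$, $\cos x$ preserves boundedness and the smooth local part of $\mathrm{sinc}$ is harmless.

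To bound $H_1$ we discretize with $a=1$. Write $x\in[k,k+1)$ and $y\in[j,j+1)$. For $|k-j|\ge2$ we have
$$ \frac1{x-y}=\frac1{k-j}+O\Big(\frac1{(k-j)^2}\Big). $$
The error term is dominated by a kernel $\lesssim (1+|x-y|)^{-2}$, which is controlled by $P$. The main term equals $\sum_{j}\frac{Vf_j}{k-j}$ on $[k,k+1)$, up to the omitted near-diagonal pieces $|k-j|\le1$, which are again bounded compactly supported kernels handled by $B_s$. By Proposition~\ref{discrap}, $w_d\in A_p^d$, so the discrete Hilbert transform $\H$ is bounded on $\ell^p(w_d)$ by the Hunt--Muckenhoupt--Wheeden theorem. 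The function $g=\sum_k (\H Vf)_k\chi_{[k,k+1)}$ satisfies $\|g\|_{L^p(w)}^p=\sum_k|(\H Vf)_k|^p w_k$. Finally, the bound $\|Vf\|_{\ell^p(w_d)}\lesssim\|f\|_{L^p(w)}$ from the proposition on $V$ closes the estimate.

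\emph{Necessity.} Assume $S$ is bounded on $L^p(w)$. The goal is to recover the uniform boundedness of $B_s$, $s\ge1$, or directly the $A_{p,1}$ inequality via testing. Fix an interval $I$ with $|I|\ge C$ (large), and take $f=\sigma\chi_I\, e^{i\,\cdot}$ or $f=\sigma\chi_I\cos(\cdot)$, with $\sigma=w^{-1/(p-1)}$ (truncated if necessary to ensure finiteness). After demodulation, the kernel $\sin(x-y)/(x-y)$ acting on $f\,e^{iy}$ yields a term like $\frac{e^{ix}}{2i}\int \frac{f(y)}{x-y}\,dy$, i.e. a Hilbert-type integral of a positive function, plus an oscillatory term $\int \frac{e^{2iy} f(y)}{x-y}\,dy$ that must be shown to be small.

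For $x$ in an interval $I'$ of comparable length adjacent to $I$ (at distance $\approx|I|$), the positive part is $\gtrsim \sigma(I)/|I|$. This gives $w(I')\,(\sigma(I)/|I|)^p\lesssim \|S\|^p\sigma(I)$. Combined with the dual statement (since $S$ is self-adjoint, it is bounded on $L^{p'}(\sigma)$) and a standard doubling-type argument at scales $\ge1$, this yields $w(I)\sigma(I)^{p-1}\lesssim|I|^p$.

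The main obstacle is controlling the oscillatory term $\int e^{2iy}\sigma(y)\chi_I(y)(x-y)^{-1}\,dy$: since $\sigma$ is arbitrary, it need not be small. The first remedy to try is choosing the test function as $\sigma$ times a phase $e^{-iy}$ or $\mathrm{sgn}\sin(y)$-adapted phase, so that the integrand of the main term becomes $|\sin(x-y)|\sigma(y)/|x-y|$ for $x$ in a suitable set. Concretely, one restricts $x$ to a set where $\sin(x-y)$ has a fixed sign relative to the phase, which has proportional measure on each unit cell. We then lose only a factor of $w$ on a positive-proportion subset of each unit cell of $I'$. Recovering $w(I')$ from this subset requires a local doubling at unit scale, which should itself follow from testing $S$ on unit-scale bumps, since $\mathrm{sinc}\gtrsim1$ on $[-1,1]$.
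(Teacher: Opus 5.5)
Your sufficiency half is essentially the paper's argument, and it is fine: you split off $\sin x\,H_r(\cos\cdot f)-\cos x\,H_r(\sin\cdot f)$, discretize $H_r$ at unit scale, control the error kernel by $P$ and the near-diagonal cells by $B_s$, and use Proposition~\ref{discrap} with the discrete Hilbert transform.

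The necessity half has a genuine gap. You locate the obstacle correctly, but the proposed remedy does not work. Suppose the test function lives on a whole interval $I$ with $|I|\ge 2\pi$. Then no phase $\phi$ gives $\sin(x-y)\phi(y)=|\sin(x-y)|$ for all $y\in I$ and all $x$ in a set, unless that set lies in a single residue class mod $2\pi$. The reason is that the sign pattern $y\mapsto \mathrm{sgn}\,\sin(x-y)$ moves with $x$. So the ``positive-proportion set of $x$ in each cell'' you describe does not exist. Even granting a sign, the bound $\int_I|\sin(x-y)|\sigma(y)\,dy\gtrsim\sigma(I)$ fails if $\sigma$ sits near $x+\pi\Z$. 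Any approximate version leaves you with $x$-dependent exceptional subsets of each unit cell. Testing $S$ on bumps gives only unit-scale doubling; for intervals of length $\delta$ the constant degenerates like $\delta^{-p}$, consistent with Proposition~\ref{ex:ap1counter}. Unit-scale doubling says nothing about how $w$ or $\sigma$ is distributed \emph{inside} a cell, so it cannot recover $w(I')$ from $w$ of an arbitrary large subset. Ruling out such concentration is unit-scale $A_p$ information, which is exactly what you are trying to prove.

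The missing idea is to restrict both the support of the test function and the evaluation set to $2\pi$-periodic unions of \emph{whole cells of a fixed grid}. Then only cell-to-cell comparability is ever needed. The paper takes $I_k=\frac{\pi}{4}[k,k+1]$ and $J_{k,l}=\frac{\pi}{4}[k,k+l]\cap\bigcup_{m\ge0}I_{k+8m}$. It tests with $f=\sigma\chi_{J_{k,l}}$ and evaluates on $J_{k,l}+\frac{\pi}{4}l$ with $l\in2\N\setminus4\N$. Then $0\le x-y\le \frac{l\pi}{2}$ and $x-y\in\frac{l\pi}{4}+2\pi\Z+[-\frac{\pi}{4},\frac{\pi}{4}]$, a window around an odd multiple of $\frac{\pi}{2}$. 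So the kernel has constant sign and size $\gtrsim 1/|I_{k,l}|$, which gives $w(J_{k,l}+\frac{\pi}{4}l)\,\sigma(J_{k,l})^{p-1}\lesssim|I_{k,l}|^p$. Three comparabilities then allow replacing $J_{k,l}$ by the full intervals:
\begin{enumerate}
\item $w(I_k)\approx w(I_{k+1})$, from testing on the halves $I_k^j$, where $x-y\in[\frac{\pi}{8},\frac{3\pi}{8}]$ and $\mathrm{sinc}$ is bounded below;
\item the same for $\sigma$, by self-adjointness and duality;
\item $w(I_{k+l,l})\approx w(I_{k,l})$ for $l\notin 4\N$.
\end{enumerate}
Finally, a sandwiching argument with $5I_{k,l}=I_{k-2l,5l}$ covers every interval of length $>\pi$.
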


\begin{proof}
    Assume that $S$ is a bounded operator on $L^p(w)$. For each $k\in\Z$ and $j\in\{1,2\}$, denote the intervals
    $$I_k = \left[ \frac{\pi}{4}k,\frac{\pi}{4}(k+1)  \right] \qquad \text{and} \qquad I_k^j = \left[ \frac{\pi}{8} (2k+j-1),\frac{\pi}{8}(2k+j)  \right].$$
    Note that $I_k^j$ is the $j$-th half of the interval $I_k$. Denote also $I_{k,l} = \bigcup_{m=k}^{k+l-1} I_m = \frac{\pi}{4} \cdot \left[ k, k+l  \right], $ for $l\geq 1$, and the intersection $I_{k,l}^j = I_{k,l} \cap \left( \bigcup_{m=0}^\infty I_{k+8m}^j \right). $

    Let us prove, for each $k\in \Z$ and $l\in \N\setminus 4\N$, that
    \begin{equation} \label{eql}
        \int_{I_{k+l,l}} w(x) \, dx \approx \int_{I_{k,l}} w(x)\, dx.
    \end{equation}
    
    For $l = 1$ we have $I_{k,1} = I_k$ and $I_{k,1}^j = I_k^j$. Let $x\in I_{k+1}^j$, and note that for each $y\in I_k^j$ we have $x-y \in [\frac{\pi}{8}, \frac{3\pi}{8}].$ With this, we can obtain the following estimate
    \begin{align*}
        |S\chi_{I_k^j}(x)| = \bigg| \int_{I_k^j} \frac{\sin(x-y)}{x-y} \, dy \bigg| \geq \frac{\sin\left(\frac{\pi}{8}\right)}{\frac{3\pi}{8}} |I_k^j| = \frac{\sin\left(\frac{\pi}{8}\right)}{3},
    \end{align*}
    and, hence,
    $$\int_{I_{k+1}^j} w(y)\, dy \lesssim \int_{\R} |S\chi_{I_k^j}(y)|^p w(y)\, dy \lesssim \int_{\R} \chi_{I_k^j}(y)^p w(y)\, dy = \int_{I_k^j} w(y)\, dy. $$
    Since $\frac{\sin x}{x}$ is an even function, we can interchange the roles of $I_k^j$ and $I_{k+1}^j$ to obtain the opposite inequality, so we have proved
    $$\int_{I_{k+1}^j} w(x) \, dx \approx \int_{I_{k}^j} w(x)\, dx.$$
    Adding up the equivalences for $j=1,2$, we obtain (\ref{eql}) for $l=1$.

    Let us prove (\ref{eql}) for $l>1$ with $l\not\in 4\N$. First, because it is true for $l=1$, applying (\ref{eql}) sequentially for the indices $\{k, k+1, k+2, \ldots, k+6\} \subseteq \Z$ provides
    $$ \int_{I_k} w(x) \, dx \approx \int_{I_{k+1}} w(x) \, dx \approx \int_{I_{k+2}} w(x) \, dx \approx \ldots \approx \int_{I_{k+7}} w(x) \, dx. $$
    Therefore, for any $k\in \Z$ and $l\in\{1,2,\ldots,8\}$, by adding the integrals over $I_m$ from $m=k$ to $m = k+l-1$ we have
    $$ \int_{I_{k,l}} w(x) \, dx \approx \int_{I_k} w(x) \, dx, $$
    so, by the definition of the sets $I_{k,l}^j$, we conclude that for any $k\in\Z$ and $l\geq 1$
    $$ \int_{I_{k,l}} w(x)\, dx \approx \int_{I_{k,l}^1 \cup I_{k,l}^2} w(x) \, dx = \int_{I_{k,l}^1} w(x) \, dx + \int_{ I_{k,l}^2} w(x) \, dx. $$

    Let $x\in I_{k+l,l}^j $. If $y\in I_{k,l}^j$, since $I_{n,l}^j \subseteq I_{n,l}$, we have that $x-y \leq \frac{2l\pi}{4} = l \frac{\pi}{2}$. Besides, there exist two numbers $m_x, m_y\geq 0$ such that
    \begin{align*}
        x &\in \left[ \frac{\pi}{8}(2k + 2l + 16m_x + j-1), \frac{\pi}{8}(2k + 2l + 16m_x + j) \right], \\
        y &\in \left[ \frac{\pi}{8}(2k + 16m_y + j-1), \frac{\pi}{8}(2k + 16m_y + j) \right],
    \end{align*}
    so the difference satisfies $x-y \in \frac{l}{4}\pi + 2(m_x - m_y)\pi + [ -\frac{\pi}{8}, \frac{\pi}{8} ] . $
    
    This way, whenever $l$ is not a multiple of $4$, it follows that $|\sin(x-y)| \geq \sin\left(\frac{\pi}{8}\right)$. Then,
    $$ |S\chi_{I_{k,l}^j}(x)| = \bigg| \int_{I_{k,l}^j} \frac{\sin(x-y)}{x-y} \, dy \bigg| \geq \frac{\sin\left(\frac{\pi}{8}\right)}{l\frac{\pi}{2}}|I_{k,l}^j| = \frac{\sin\left(\frac{\pi}{8}\right)}{l\frac{\pi}{2}} \left( \left\lfloor \frac{l}{8}\right\rfloor +1\right) \frac{\pi}{8} \geq \frac{\sin\left(\frac{\pi}{8}\right)}{32}. $$
    With this estimate, we get that
    $$ \int_{I_{k+l,l}^j} \hspace{-5pt} w(y)\, dy \lesssim \int_\R |S\chi_{I_{k,l}^j}(y)|^p w(y) \, dy \lesssim \int_{I_{k,l}^j} w(y)\, dy. $$
    Arguing analogously with the roles of $I_{k,l}^j$ and $I_{k+l,l}^j$ interchanged, we prove the opposite inequality. Hence, we obtain the following:
    $$\int_{I_{k+l,l}} \hspace{-5pt} w(x) \, dx \approx \int_{I_{k+l,l}^1} \hspace{-5pt} w(x) \, dx + \int_{ I_{k+l,l}^2} \hspace{-5pt} w(x) \, dx \approx \int_{I_{k,l}^1} \hspace{-3pt} w(x) \, dx + \int_{ I_{k,l}^2} w(x) \, dx \approx \int_{I_{k,l}} w(x) \, dx. $$

    Observe that the argument for the equivalence in (\ref{eql}) depends on the size of the sets involved, $I_k$, $I_k^j$, $I_{k,l}$ and $I_{k,l}^j$, and on their position relative to each other, but it does not depend on their absolute position. This means that we can repeat the proof with any translate of the previous sets to give, for every $x\in \R$, $k\in\Z$ and $l \in \N\setminus 4\N$,
    $$ \int_{x+I_{k+l,l}} \hspace{-8pt} w(y) \, dy \approx \int_{x+I_{k,l}} \hspace{-8pt} w(y) \, dy. $$

    Recall that $S$ is a self-adjoint operator, and hence it is also bounded on $L^{p'}(w^{-\frac{1}{p-1}}) $, the dual space of $L^p(w)$. Thus, we can make the same arguments and obtain the equivalence (\ref{eql}) for $w^{-\frac{1}{p-1}}$ in place of $w$.

    Let us now prove the following estimate for each $k \in\Z$ and $l\in 2\N \setminus 4\N$:
    $$ \bigg( \int_{I_{k+l,l}} w(x) \, dx \bigg) \bigg( \int_{I_{k,l}} w(x)^{-\frac{1}{p-1}} \, dx\bigg)^{p-1} \lesssim |I_{k,l}|^p.  $$
    Since we can compare each integral over $I_{n,l}$ to that over $I_{n,l}^1 \cup I_{n,l}^2 $, for each $n\in\Z$, we need only show that
    $$ \bigg( \int_{(J_{k,l})+\frac{\pi}{4}l} w(x) \, dx \bigg) \bigg( \int_{J_{k,l}} w(x)^{-\frac{1}{p-1}} \, dx \bigg)^{p-1} \lesssim |I_{k,l}|^p, $$
    where we denote $J_{k,l} = I_{k,l}^1 \cup I_{k,l}^2$.

    Let $f = w^{-\frac{1}{p-1}} \chi_{J_{k,l}}$, and let $x\in J_{k,l} + \frac{\pi}{4}l$ and $y\in J_{k,l}$. Then, we can argue as before: first, the difference must satisfy $|x-y| \leq \frac{l\pi}{2}.$ Besides this, there exist $j_x,j_y\in \{1,2\}$ and $m_x, m_y\in \Z$ such that
    \begin{align*}
        x &\in \left[ \frac{\pi}{8}(2k + 2l + 16m_x + j_x-1), \frac{\pi}{8}(2k + 2l + 16m_x + j_x) \right], \\
        y &\in \left[ \frac{\pi}{8}(2k + 16m_y + j_y-1), \frac{\pi}{8}(2k + 16m_y + j_y) \right],
    \end{align*}
    so the difference satisfies $x-y \in \frac{l}{4}\pi + 2(m_x - m_y)\pi + [ -\frac{\pi}{4}, \frac{\pi}{4}].$
    
    Thus, whenever $l \in 2\N \setminus 4\N$, this interval is centered at some $n\pi + \frac{\pi}{2}$, so $|\sin(x-y)| \geq \sin(\frac{\pi}{4})$. In this case, we get    
    $$ |Sf(x)| = \int_{J_{k,l}} \left |\frac{\sin(x-y)}{x-y}\right | w(y)^{-\frac{1}{p-1}} \, dy \gtrsim \frac{1}{|I_{k,l}|} \int_{J_{k,l}} w(y)^{-\frac{1}{p-1}} \, dy, $$
    so we can finally see that
    \begin{align*}
        \bigg( \frac{1}{|I_{k,l}|} \int_{J_{k,l}} w(y)^{-\frac{1}{p-1}} \, dy \bigg)^p \bigg( \int_{J_{k+l,l}} w(y) \, dy \bigg) &\lesssim \int_{\R} |Sf(y)|^p \chi_{J_{k+l,l}} w(y) \, dy \\
        &\lesssim \int_{J_{k,l}} w(y)^{-\frac{p}{p-1}} w(y) \, dy =  \int_{J_{k,l}} w(y)^{-\frac{1}{p-1}} \, dy.
    \end{align*}
    Again, note that this reasoning does not depend on the absolute position of the intervals, so we can argue the same for any translates of the intervals involved.

    Now, let $I$ be an arbitrary interval with $|I| > \pi$. Let $y\in \R$, $k\in\Z$ and $l\geq 4$ such that $y + I_{k,l} \subseteq I \subseteq y+ I_{k,l+1}$. Note that in the set $\{l-3, l-2, l-1, l\}$ there is one number that also lies in $2\N \setminus 4\N$. We can see that $I_{k,l+1} \subseteq 5 I_{k,l-3}$ for any $l\geq 4$. Thus, for any $I$ of length greater than $\pi$, there exists an $l\in 2\N \setminus 4\N$ such that $y+ I_{k,l} \subseteq I \subseteq 5(y+I_{k,l}). $ Therefore, since $5(y + I_{k,l}) = y + I_{k-2l,5l}$ and $5l\in 2\N \setminus 4 \N$, we obtain that
    \begin{align*}
        \bigg( \int_I w(x) \, dx \bigg) \left( \int_I w(x)^{-\frac{1}{p-1}} \, dx \right)^{p-1} &\leq \bigg( \int_{5(y+I_{k,l})} w(x) \, dx \bigg) \left( \int_{5(y+I_{k,l})} w(x)^{-\frac{1}{p-1}} \, dx \right)^{p-1} \\
        &\lesssim |5(y+I_{k,l})|^p \lesssim |y + I_{k,l}|^p \leq |I|^p,
    \end{align*}
    so we have $w\in A_{p,1}$.

    Let us prove the sufficiency of $w\in A_{p,1}$. Recall that we have the following equality
    $$\frac{\sin(x-y)}{x-y} f(y) = \sin x \frac{\cos y f(y)}{x-y} - \cos x \frac{\sin y f(y)}{x-y} = \sin x \frac{f_c(y)}{x-y} - \cos x \frac{f_s(y)}{x-y},$$
    with $f_c = \cos \cdot f$ and $f_s = \sin \cdot f$. With this, we can express the segment multiplier as
    \begin{align*}
        Sf(x) &= \int_\R \frac{\sin(x-y)}{x-y} f(y) \, dy = \int_{|x-y| > 2\pi} \frac{\sin(x-y)}{x-y} f(y) \, dy + \int_{|x-y| \leq 2\pi} \frac{\sin(x-y)}{x-y} f(y) \, dy \\
        &= \sin x H_{2\pi}f_c(x) - \cos x H_{2\pi}f_s(x) + \int_{|x-y| \leq 2\pi} \frac{\sin(x-y)}{x-y} f(y) \, dy,
    \end{align*}
    so, taking into account that $B_{2\pi}$ is a bounded operator on $L^p(w)$ (see Proposition~\ref{unifbdd}) and that
    $$\left| \int_{|x-y| \leq 2\pi} \frac{\sin(x-y)}{x-y} f(y) \, dy \right| \leq \int_{|x-y| \leq 2\pi} |f(y)| \, dy = 4\pi B_{2\pi} (|f|)(x),$$
    we can bound the multiplier as
    $$|Sf| \lesssim |H_{2\pi}f_c| + |H_{2\pi}f_s| + B_{2\pi}(|f|). $$
    Because of Proposition~\ref{unifbdd}, it is sufficient to check that the truncated Hilbert transform is bounded on $L^p(w)$.

    Let us prove that $H_{2\pi}$ is bounded. Let $g\in L^p(w)$ and $x\in\R$. Put $k = \lfloor\frac{x}{\pi}\rfloor$, so that $x \in [k\pi, (k+1)\pi)$. Then, we have
    \begin{align*}
        H_{2\pi}g(x) &= \sum_{|k-j| \geq 3} \int_{j\pi}^{(j+1)\pi} \frac{g(y)}{x-y} \, dy + \int_{(k-2)\pi}^{x-2\pi} \frac{g(y)}{x-y} \, dy + \int_{x+2\pi}^{(k+3)\pi} \frac{g(y)}{x-y} \, dy \\
        &= \frac{1}{\pi} \sum_{|k-j|\geq 3} \frac{\int_{j\pi}^{(j+1)\pi} g(y) \, dy}{k-j} + \sum_{|k-j|\geq 3} \int_{j\pi}^{(j+1)\pi} \left( \frac{1}{x-y} - \frac{1}{k\pi - j\pi} \right) g(y) \, dy \\
        &\quad + \int_{(k-2)\pi}^{x-2\pi} \frac{g(y)}{x-y} \, dy + \int_{x+2\pi}^{(k+3)\pi} \frac{g(y)}{x-y} \, dy.
    \end{align*}
    
    The last two integrals in the right-hand side can be bounded as
    $$\bigg| \int_{(k-2)\pi}^{x-2\pi} \frac{g(y)}{x-y} \, dy + \int_{x+2\pi}^{(k+3)\pi} \frac{g(y)}{x-y} \, dy \bigg| \leq \int_{x-3\pi}^{x+3\pi} \frac{|g(y)|}{2\pi} \, dy = 3 B_{3\pi}(|g|)(x).  $$
    
    For the second term, note that if $x\in [k\pi, (k+1)\pi)$ and $y \in [j\pi, (j+1)\pi)$, then the difference satisfies $ x-y \in [(k-j-1)\pi, (k-j+1)\pi], $ or, put in another way, we have $ (k-j)\pi \in [x-y -\pi, x-y +\pi], $ so it follows that
    $$ \frac{1}{(k-j)\pi} \in \left[ \frac{1}{x-y + \pi}, \frac{1}{x-y - \pi} \right]. $$
    With this, and with the condition $|k-j|\geq 3$, we have the following inequality:
    $$ \left| \frac{1}{x-y} - \frac{1}{k\pi - j\pi} \right| \lesssim \frac{1}{1 + (x-y)^2}. $$
    Thus, the second term can be bounded as
    \begin{align*}
        \bigg| \sum_{|k-j|\geq 3} \int_{j\pi}^{(j+1)\pi} \left( \frac{1}{x-y} - \frac{1}{k\pi - j\pi} \right) g(y) \, dy \bigg| &\lesssim \sum_{|k-j|\geq 3} \int_{j\pi}^{(j+1)\pi} \frac{ |g(y)|}{1+ (x-y)^2} \, dy \\ 
        &\leq \sum_{j\in \Z} \int_{j\pi}^{(j+1)\pi} \frac{ |g(y)|}{1+ (x-y)^2} \, dy \\
        &= \int_\R \frac{|g(y)|}{1 + (x-y)^2} \, dy = P(|g|)(x),
    \end{align*}
    where $P$ is the operator introduced in Definition \ref{def:poisson}. Considering both Proposition~\ref{unifbdd} and Lemma~\ref{poisson}, we know that $P$ is bounded in $L^p(w)$.

    Hence, to prove the boundedness of $H_{2\pi}$, we only need to show that
    $$ \Tilde{K}(g)(x) = \sum_{|\lfloor\frac{x}{\pi}\rfloor -j|\geq 3} \frac{\int_{j\pi}^{(j+1)\pi} g(y) \, dy}{\lfloor\frac{x}{\pi}\rfloor -j} = \sum_{|k-j|\geq 3} \frac{(Vg)_j}{k-j} =: (\H_3 V g)_{k}, \quad x\in [k\pi, (k+1)\pi), $$
    is a bounded operator on $L^p(w)$, where $\H_3$ denotes the corresponding truncated discrete Hilbert transform. It is clear that $\|\Tilde{K} g\|_{L^p(w)} = \|\H_3 V g\|_{\ell^p(w_d)}$, it can be immediately checked that $\H - \H_3$ is bounded on $\ell^p(w_d)$ and, because of Proposition~\ref{discrap}, we know that $w_d\in A_p^d$ and $\H$ is bounded as well. Then,
    $$ \|\Tilde{K}g\|_{L^p(w)} \leq \|(\H_3 - \H) V g\|_{\ell^p(w_d)} + \|\H V g\|_{\ell^p(w_d)} \leq (\|\H - \H_3\| + \|\H\|) \|V\| \|g\|_{L^p(w)}. $$
    Therefore, the boundedness for $\Tilde{K}$ follows and, consequently, for $H_{2\pi}$ and $S$, finishing the proof.
\end{proof}

\begin{rem}
    As shown in Proposition \ref{ex:ap1counter}, there are weights that satisfy the $A_{p,1}$ condition while failing to be in $A_p$. Because of Theorem \ref{thm:main}, this means that there are weighted spaces $L^p(w)$ where the segment multiplier is bounded but the Hilbert transform is not.
\end{rem}

\begin{rem}
    A class similar in definition to $A_{p,1}$ is that of local Muckenhoupt weights; that is, weights $w$ such that
    $$ [w]_{A_p^{\text{loc}}} = \sup_{|I|\leq 1} \left( \frac{1}{|I|} \int_I w(x) \, dx \right) \left(\frac{1}{|I|} \int_I w(x)^{-\frac{1}{p-1}}\, dx \right)^{p-1} < \infty. $$
    Note that now the supremum covers all small intervals, $|I| \leq 1$, while it misses the large ones. This condition was first introduced by Rychkov in \cite{rychkov}, and it satisfies analogous properties to the ones we have proved for $A_{p,1}$. Both classes are distinct, and it is clear from their definitions that $A_{p,1} \cap A_p^{\text{loc}} = A_p$.
\end{rem}

\end{document}